\newtheorem{thm}{Theorem}
\newtheorem{cor}[thm]{Corollary}
\newtheorem{lem}[thm]{Lemma}
\newtheorem{prop}[thm]{Proposition}
\newtheorem{defn}[thm]{Definition}
\theoremstyle{remark}
\newtheorem{remark}[thm]{Remark}
\newtheorem{example}[thm]{Example}
\numberwithin{equation}{section}
\numberwithin{thm}{section}
\DeclareMathOperator{\Hom}{Hom}
\DeclareMathOperator{\Aut}{Aut}
\DeclareMathOperator{\Spec}{Spec}
\DeclareMathOperator{\Proj}{Proj}
\DeclareMathOperator{\NS}{NS}
\DeclareMathOperator{\chr}{char}
\DeclareMathOperator{\SL}{SL}
\DeclareMathOperator{\rank}{rank}
\newcommand{\R}{\mathbb{R}}
\newcommand{\C}{\mathbb{C}}
\newcommand{\Q}{\mathbb{Q}}
\newcommand{\Z}{\mathbb{Z}}
\newcommand{\PP}{\mathbb{P}}
\newcommand{\abs}[1]{\left\vert#1\right\vert}
\newcommand{\set}[1]{\left\{#1\right\}}
\newcommand{\inn}[1]{\left\langle#1\right\rangle}
\title[The Picard number of K3 surfaces of BHK type]{A simple formula for the Picard number of K3 surfaces of BHK type}
\author{Christopher Lyons}
\address{California State University, Fullerton \\
Department of Mathematics \\
800 N. State College Blvd \\
Fullerton, CA 92834 }
\email{clyons@fullerton.edu}
\author{Bora Olcken}
\address{University of Calfornia, Santa Cruz \\
Mathematics Department \\
1156 High Street \\
Santa Cruz, CA 95064}
\email{bolcken@ucsc.edu}
\subjclass[2010]{14J28, 14C22, 14J33}
\begin{document}

\maketitle

\begin{abstract}
The BHK mirror symmetry construction stems from work Berglund and H\"{u}bsch \cite{BH}, and applies to certain types of Calabi-Yau varieties that are birational to finite quotients of Fermat varieties.  Their definition involves a matrix $A$ and a certain finite abelian group $G$, and we denote the corresponding Calabi-Yau variety by $Z_{A,G}$.  The transpose matrix $A^T$ and the so-called dual group $G^T$ give rise to the BHK mirror variety $Z_{A^T,G^T}$.  In the case of dimension 2, the surface $Z_{A,G}$ is a \emph{K3 surface of BHK type}.

Let $Z_{A,G}$ be a K3 surface of BHK type, with BHK mirror $Z_{A^T,G^T}$.  Using work of Shioda \cite{Shioda}, Kelly shows in \cite{Kelly} that the geometric Picard number $\rho(Z_{A,G})$ of $Z_{A,G}$ may be expressed in terms of a certain subset of the dual group $G^T$.  We simplify this formula significantly to show that $\rho(Z_{A,G})$ depends only upon the degree of the mirror polynomial $F_{A^T}$.
\end{abstract}

\section{Introduction}

Let $k$ be an algebraically closed field of characteristic $p\geq 0$, and let $X$ be a smooth projective surface defined over $k$.  The Picard number of $X$ is defined as the rank of the Neron-Severi group of $X$:
\[
\rho = \rho(X) = \rank_\Z \NS(X),
\]
where $\NS(X)$ denotes the group of divisors on $X$ modulo algebraic equivalence.  The Picard number is a fundamental invariant of a surface, but in practice the determination of $\rho(X)$ can be difficult for a given surface $X$.

The case of K3 surfaces illustrates this point well, where one has $1\leq \rho\leq 20$ in characteristic 0 and $1\leq \rho\leq 22$ in positive characteristic.  For instance, when $\rho(X)\geq 5$, one knows that $X$ has an elliptic fibration, and when $\rho(X)\geq 12$, then such a fibration exists having a section (see for instance \cite[\S11.1]{Huy}).  If $\rho(X)\geq 17$, one may often deduce the existence of a nontrivial algebraic correspondence between $X$ and an abelian surface \cite{Mor}.  K3 surfaces satisfying $\rho = 22$ (in positive characteristic) fall into the highly interesting class of \emph{supersingular} K3 surfaces.   This small selection of facts shows the importance of determining the Picard number of a K3 surface, but this task is usually far from straightforward.  This may be inferred, for instance, from van Luijk's construction \cite{vL} of the first explicit example of a K3 surface over $\Q$ with $\rho=1$, a relatively recent feat within the much longer history of K3 surfaces.

In \cite{Shioda}, Shioda identifies a special class of surfaces for which one the Picard number may be determined in a combinatorial manner.  Any one of these surfaces $X$ is birational to the quotient of a Fermat surface $X_0^d+X_1^d+X_2^d+X_3^d=0$ by a finite group $\Gamma$, and this allows Shioda to obtain a formula of the form 
\[
\rho(X) = b_2(X)-\#(\mathfrak I_d(p)\cap H),
\]
where $b_2(X)$ is the second Betti number of $X$, $H$ is a certain subgroup of $(\Z/d)^4$ (determined by $\Gamma$), and $\mathfrak I_d(p)$ is a certain subset of $(\Z/d)^4$ (see \S\ref{Pic-no-sec}).  In this sense, the formula for $\rho(X)$ in \cite{Shioda} is elementary and computable; however, the nontrivial amount of calculation required generally makes it difficult to predict the value of $\rho(X)$ without completing the full computation.

One interesting class of surfaces to which Shioda's formulas apply are \emph{K3 surfaces of BHK type}.  These are the 2-dimensional examples of special types of Calabi-Yau varieties for which a notion of mirror symmetry was initiated by Berglund--H\"{u}bsch \cite{BH}, and extended by Berglund--Henningson \cite{BHen} and Krawitz \cite{Krawitz}.  Roughly speaking, a K3 surface of BHK type is determined by what we shall call a \emph{adequate BHK pair} $(A,G)$, where $A=(A_{ij})$ is a 4-by-4 matrix of nonnegative integers and $G$ is a subgroup of $(\Z/d)^4$ (for some $d\geq 1$ determined by $A$); see \S\ref{const-sec} and \S\ref{K3-sec} for the precise requirements on $A$ and $G$.  The matrix $A$ yields a polynomial of the form
\begin{equation}\label{FA-eqn}
F_A = \sum_{i=0}^3 x_0^{A_{i0}}x_1^{A_{i1}}x_2^{A_{i2}}x_3^{A_{i3}},
\end{equation}
one which turns out to be quasihomogeneous of some degree $h$ and weights ${\bf q} = (q_0,q_1,q_2,q_3)$.  This will define a hypersurface $X_A$ of degree $h$ in the weighted projective space $\PP^3({\bf q})$, and its minimal resolution $\tilde X_A$ will be a K3 surface.  The group $G$ yields a group of symplectic automorphisms on $\tilde X_A$, and hence the minimal resolution of the quotient is again a K3 surface $Z_{A,G}$; the latter surface is called a K3 surface of BHK type.

The BHK mirror construction starts by forming the transpose $A^T$, which in turn yields a second polynomial $F_{A^T}$.  From $G$, one may also form the so-called \emph{dual group} $G^T\subseteq (\Z/d)^4$ (see \S\ref{mirror-sec}).  When $(A^T,G^T)$ is another adequate BHK pair, it gives rise to the K3 surface $Z_{A^T,G^T}$, which is the \emph{BHK mirror surface} of $Z_{A,G}$.

In \cite{Kelly}, Kelly applies Shioda's formula to the BHK construction to reveal a striking relationship between the Picard numbers of the K3 surfaces $Z_{A,G}$ and $Z_{A^T,G^T}$, one that adds weight to the ``mirror'' adjective:

\begin{thm}[Kelly]\label{Kelly-intro-thm}
Let $Z_{A,G}$ and $Z_{A^T,G^T}$ be K3 surfaces of BHK type that are BHK mirrors.  Their Picard numbers are given by
\begin{eqnarray*}
\rho(Z_{A,G}) &=& 22-\#(\mathfrak I_d(p)\cap G^T) \\
\rho(Z_{A^T,G^T}) &=& 22-\#(\mathfrak I_d(p)\cap G).
\end{eqnarray*}
\end{thm}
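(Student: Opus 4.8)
The plan is to deduce both formulas directly from Shioda's formula
\[
\rho(X) = b_2(X) - \#(\mathfrak I_d(p)\cap H)
\]
recalled in the introduction, applied to a presentation of $Z_{A,G}$ as a resolved finite quotient of the Fermat surface $X_d\colon X_0^d+X_1^d+X_2^d+X_3^d=0$. Because $Z_{A,G}$ is a K3 surface we have $b_2(Z_{A,G})=22$ at once, so the entire content of the theorem lies in identifying the subgroup $H\subseteq(\Z/d)^4$ governing this quotient and showing that it equals the dual group $G^T$. Thus I would first verify that $Z_{A,G}$ falls within Shioda's class, then pin down $H$, and finally recognize $H=G^T$.

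To produce the Fermat presentation I would use that an adequate BHK pair forces $A$ to be invertible over $\Q$, with $d$ chosen so that $dA^{-1}$ has integer entries. The polynomial $F_A$ is then a sum of four monomials whose exponent matrix is $A$, i.e.\ a Delsarte-type surface, and Shioda's observation is that such a surface is dominated by $X_d$ through the monomial map whose exponent matrix is $dA^{-1}$. This realizes $\tilde X_A$ birationally as a quotient $X_d/\Gamma_A$, where $\Gamma_A$ is the kernel (mod $d$) of that exponent matrix acting on $(\mu_d)^4$ modulo the diagonal; adjoining the symplectic group $G$ enlarges $\Gamma_A$ to a group $\Gamma$ with $Z_{A,G}$ birational to the resolution of $X_d/\Gamma$. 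By Shioda's setup the surviving eigenspaces of $H^2(X_d)$ are indexed by the characters trivial on $\Gamma$, so $H$ is the annihilator of $\Gamma$ under the standard perfect pairing on $(\Z/d)^4$. Since the exponent matrix is $dA^{-1}$, its transpose $d(A^T)^{-1}$ is what enters this annihilator, which is the precise point where the transpose $A^T$ makes its appearance.

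The crux, and the step I expect to demand the most care, is checking that this annihilator is exactly the dual group $G^T$ from \S\ref{mirror-sec}. Here I would unwind the Berglund--H\"ubsch--Krawitz definition of $G^T$, which is built from $A^T$ together with the annihilator of $G$ inside the lattice spanned by the rows of $A^{-1}$, and confirm that this is the same linear-algebra datum produced by the annihilator $H$ above. In effect, the transpose arising when passing from the monomial map's exponent matrix $dA^{-1}$ to the character annihilator of its kernel is the very transpose used to define the mirror, so both constructions land on the same subgroup of $(\Z/d)^4$. Verifying this compatibility at the character level, including the bookkeeping for the diagonal $\mu_d$ that acts trivially and for the passage to weighted projective space, is where the argument must be carried out carefully.

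Finally, the mirror formula follows by symmetry. Replacing the pair $(A,G)$ with $(A^T,G^T)$ interchanges the roles of the two matrices and two groups, and since $(A^T)^T=A$ and $(G^T)^T=G$, the identical argument gives the annihilator associated to $Z_{A^T,G^T}$ as $G$, hence $\rho(Z_{A^T,G^T})=22-\#(\mathfrak I_d(p)\cap G)$. I would also check that the integer $d$ attached to $(A^T,G^T)$ agrees with the one attached to $(A,G)$ — this is forced by the adequacy hypotheses — so that the same set $\mathfrak I_d(p)$ governs both formulas.
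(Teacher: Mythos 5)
The paper itself does not prove this theorem: it is quoted verbatim from Kelly's paper \cite{Kelly} (restated as Theorem \ref{Kelly-thm}), and the ingredients of Kelly's argument --- the birational presentation of $Z_{A,G}$ as a Fermat quotient $Y_d/\Gamma$ from \cite[Cor.~3.2]{Kelly2}, the character decomposition of $H^2(Y_d,\C)$, and the identification $L(\Gamma) = G^T\cap M_d$ from \cite[Prop.~3]{Kelly} --- appear in the paper only in the section on Fermat quotients, where they are used to prove Proposition \ref{exactly-one-prop} rather than the theorem itself. Your proposal is a faithful reconstruction of that same route (Shioda's formula with $b_2 = 22$, the monomial covering of $X_A$ by $Y_d$ with exponent matrix $dA^{-1}$, and the identification of the character annihilator of $\Gamma$ with $G^T$, the step you correctly flag as the crux and which is precisely Kelly's Proposition 3), so it matches the approach of the cited source; note only that the identification actually yields $G^T\cap M_d$, which suffices because $\mathfrak I_d(p)\subseteq \mathfrak A_d\subseteq M_d$.
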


Yet despite the appealing the relation in Theorem \ref{Kelly-intro-thm}, the precise values of $\rho(Z_{A,G})$ and $\rho(Z_{A^T,G^T})$ can still be difficult to anticipate without a fair amount of calculation.  One arithmetically interesting demonstration of this is given by the following question: Noting that $F_A$ has integral coefficients, if we fix the pair $(A,G)$ and allow the prime $p$ to vary, how does $\rho(Z_{A,G})$ vary?   As an intriguing example at the end of \cite{Kelly} suggests, the answer is not obvious, but nevertheless appears to be simpler than one might expect from Shioda's formula.
 
Our main result simplifies the formulas in Theorem \ref{Kelly-intro-thm} and also answers the question of how the Picard number of $Z_{A,G}$ varies with $p$:

\begin{thm}\label{main-intro-thm}
Let $Z_{A,G}$ and $Z_{A^T,G^T}$ be K3 surfaces of BHK type that are BHK mirrors.  Let $h$ (resp.\ $h_T$) denote the degree of the quasihomogeneous polynomial $F_A$ (resp.\ $F_{A^T}$).  The geometric Picard numbers of these surfaces are given as follows:
\begin{enumerate}
\item If $\chr k = 0$ then
\begin{eqnarray*}
\rho(Z_{A,G}) &=& 22-\phi(h_T) \\
\rho(Z_{A^T,G^T}) &=& 22-\phi(h).
\end{eqnarray*}
\item If $\chr k=p>0$ with $p\nmid d$ then
\begin{eqnarray*}
\rho(Z_{A,G}) &=& \begin{cases}
22 & \text{ if } p^\ell\equiv -1 \pmod{h_T} \text{ for some } \ell \\
22-\phi(h_T) & \text{ if } p^\ell\not\equiv -1 \pmod{h_T} \text{ for all } \ell \\
\end{cases} \\
\rho(Z_{A^T,G^T}) &=& \begin{cases}
22 & \text{ if } p^\ell\equiv -1 \pmod{h} \text{ for some } \ell \\
22-\phi(h) & \text{ if } p^\ell\not\equiv -1 \pmod{h} \text{ for all } \ell \\
\end{cases}
\end{eqnarray*}
\end{enumerate}
\end{thm}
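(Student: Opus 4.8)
The plan is to start from Kelly's formula (Theorem~\ref{Kelly-intro-thm}), which reduces everything to computing the single cardinality $N := \#(\mathfrak I_d(p)\cap G^T)$; the desired statements then read $N = \phi(h_T)$ in the generic case and $N = 0$ in the supersingular case. Recall that Shioda's set $\mathfrak I_d(p)$ consists of those $a\in(\Z/d)^4$ with all $a_i\neq 0$ and $\sum_i a_i\equiv 0$ whose eigenspace $V(a)$ in $H^2$ is transcendental. Writing $|a| = \frac1d\sum_i\langle a_i\rangle$ for the representative sum in $\{1,\dots,d-1\}$, a character contributes to $H^{2,0}$, $H^{1,1}$, or $H^{0,2}$ according to whether $|a| = 1,2,3$. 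First I would isolate the distinguished character $\mathbf v\in G^T$ cutting out the holomorphic $2$-form of $Z_{A,G}$: since $h^{2,0}(Z_{A,G}) = 1$, the element $\mathbf v$ is the \emph{unique} member of $G^T$ with $|\mathbf v| = 1$, and its conjugate $-\mathbf v$ is the unique one with $|{-\mathbf v}| = 3$, while every other character of $G^T$ satisfies $|a| = 2$.

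The key arithmetic input is that $\mathbf v$ is, up to the embedding $(\Z/h_T)^4\hookrightarrow(\Z/d)^4$, the reduced weight vector $(q^T_0,\dots,q^T_3)$ of the mirror polynomial $F_{A^T}$. Here I would invoke the description of $G^T$ and of the $2$-form from the earlier sections to pin down $\mathbf v$, after which the Calabi--Yau relation $\sum_i q^T_i = h_T$ gives $|\mathbf v| = 1$ and well-formedness ($\gcd(q^T_0,\dots,q^T_3,h_T) = 1$) gives that $\mathbf v$ has exact order $h_T$ in $(\Z/d)^4$. For $\chr k = 0$ I would then argue that $\mathfrak I_d(p)\cap G^T$ is precisely the full Galois orbit $O := (\Z/d)^*\cdot\mathbf v$ (note $O\subseteq G^T$, since multiplication by a unit preserves the subgroup $G^T$): indeed $a$ is transcendental exactly when some conjugate $ta$ has $|ta|\neq 2$, i.e.\ when the orbit of $a$ meets $\{\pm\mathbf v\}$, which happens iff $a\in O$. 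Since $\mathbf v$ has order $h_T$, the orbit map $t\mapsto t\mathbf v$ induces a bijection $(\Z/h_T)^*\xrightarrow{\sim} O$, whence $N = \#O = \phi(h_T)$, proving part~(1).

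For $\chr k = p>0$ with $p\nmid d$, algebraicity of $V(a)$ over $\bar k$ is governed by its Frobenius eigenvalue, the Jacobi sum $J(a)$, and $V(a)$ is algebraic iff $J(a)/q$ is a root of unity; by Stickelberger this is invariant under the Galois action $a\mapsto ta$, so the single orbit $O$ is treated all-or-nothing. Evaluating the $\mathfrak p$-adic valuation at a prime $\mathfrak p\mid p$ produces $\sum_{k=0}^{f-1}|p^k\mathbf v| - 2f$ with $f = \ord_{h_T}(p)$; because the only non-middle characters in the orbit are $\mathbf v$ and $-\mathbf v$, this sum equals $2f$ (valuation $0$, so $V(\mathbf v)$ algebraic and hence $N = 0$) exactly when $-\mathbf v$ lies in the $\langle p\rangle$-orbit of $\mathbf v$, i.e.\ when $p^\ell\equiv -1\pmod{h_T}$ for some $\ell$; otherwise the valuation is $-1$, so $V(\mathbf v)$ is transcendental, the whole orbit $O$ survives, and $N = \phi(h_T)$. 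This yields part~(2). I expect the two genuine obstacles to be (i) identifying $\mathbf v$ with the mirror weight vector and establishing $\ord(\mathbf v) = h_T$ cleanly from the BHK data, and (ii) the Stickelberger bookkeeping showing that one valuation computation forces the all-or-nothing behavior of $O$; the Fermat quartic ($h_T = 4$, supersingular iff $p\equiv 3\bmod 4$) is a reassuring check on both.
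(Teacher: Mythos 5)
Your proposal is correct and follows essentially the same route as the paper's proof: reduce via Kelly's formula to counting $\#(\mathfrak I_d(p)\cap G^T)$, use $h^{2,0}(Z_{A,G})=1$ to show $\mathfrak A_d\cap G^T$ contains a unique age-1 element, identify that element with the mirror weight vector $\mathbf{j}_{A^T}=\tfrac{d}{h_T}\mathbf{q}_T$ of exact order $h_T$, and then count its $(\Z/d)^\times$-orbit (resp.\ analyze its $\langle p\rangle$-suborbits) to get $\phi(h_T)$ or $0$. The only deviations are cosmetic: in characteristic $p$ you re-derive the transcendence criterion from Jacobi sums and Stickelberger, which is precisely the content already packaged into Kelly's combinatorial definition of $\mathfrak I_d(p)$ (the paper simply manipulates that definition, recasting it as the condition that some $\langle p\rangle$-suborbit has unequal numbers of age-1 and age-3 elements), and the bijection $(\Z/h_T)^\times\xrightarrow{\sim}(\Z/d)^\times\cdot\mathbf{j}_{A^T}$ that you assert requires the surjectivity of $(\Z/d)^\times\to(\Z/h_T)^\times$, which the paper establishes by a Chinese Remainder Theorem argument in Proposition \ref{0-prop}.
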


In particular, we recover a result of Inose \cite[Prop.\ 1.1]{Inose} in the special case of K3 surfaces of BHK type:

\begin{cor}
The Picard number of $Z_{A,G}$ is independent of $G$.
\end{cor}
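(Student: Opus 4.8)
The plan is to read the result directly off Theorem \ref{main-intro-thm}, whose formulas express $\rho(Z_{A,G})$ entirely in terms of quantities attached to the matrix $A$ and the field $k$, with no reference to the group $G$.

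First I would observe that the degree $h_T$ of the quasihomogeneous polynomial $F_{A^T}$ is defined purely from the entries of the transpose $A^T$, exactly as in \eqref{FA-eqn}; it therefore depends only on $A$. The same is true of the integer $d$, which is determined by $A$ (as recalled in the introduction) and which controls the hypothesis $p\nmid d$ distinguishing the two cases of the theorem. Consequently, in characteristic $0$ the formula $\rho(Z_{A,G}) = 22-\phi(h_T)$ involves only $A$; and in characteristic $p>0$ with $p\nmid d$, the value of $\rho(Z_{A,G})$ is governed solely by whether $p^\ell\equiv -1 \pmod{h_T}$ for some $\ell$, a condition depending only on $p$ and $h_T$.

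In every case, then, $\rho(Z_{A,G})$ is computed from $h_T$, $d$, and $p$, none of which involves $G$. Hence the Picard number is unchanged as $G$ ranges over the groups for which $(A,G)$ remains an adequate BHK pair, which is precisely the assertion of the corollary. I expect no real obstacle here: the substance resides entirely in Theorem \ref{main-intro-thm}, and the only point deserving explicit mention is that $h_T$ and $d$ are intrinsic to $A$, which is immediate from their definitions.
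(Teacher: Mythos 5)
Your proposal is correct and matches the paper's reasoning: the corollary is stated there as an immediate consequence of Theorem \ref{main-intro-thm}, precisely because the formulas for $\rho(Z_{A,G})$ involve only $h_T$ (intrinsic to $A$) and the characteristic $p$, never $G$. Your added observation that the adequacy/mirror hypotheses themselves depend only on $A$ and $p$ is a reasonable bit of diligence, but the substance is the same one-line deduction the paper intends.
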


Another consequence is that the possible values of Picard numbers of K3 surfaces of BHK type are constrained only by the values of the $\phi$-function.  Indeed, by looking through the wealth of examples in \cite{ABS, CLPS, CP}, and also noting that $\phi(n)\neq 14$ for any $n\geq 1$, we can conclude:

\begin{cor}
In characteristic 0 (resp.\ $p>0$), the Picard number of a K3 surface of BHK type may equal any even integer between $2$ and $20$ (resp. $22$), with the exception of $8$.
\end{cor}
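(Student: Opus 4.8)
The plan is to read the set of attainable Picard numbers directly off Theorem~\ref{main-intro-thm}: in every case the number has the shape $22-\phi(m)$, where $m$ is the degree of one of the two invertible K3-type polynomials $F_A, F_{A^T}$, and in positive characteristic the extra value $22$ may occur. I would first record the two arithmetic facts about Euler's function $\phi$ on which everything rests. The parity claim is immediate: because $F_{A^T}$ defines a Calabi--Yau hypersurface, its degree equals the sum of its four positive integer weights, so $h_T=q_0+q_1+q_2+q_3\geq 4$; since $\phi(m)$ is even for all $m\geq 3$, each number $22-\phi(m)$ is even. The same applies to $F_A$, so both mirror surfaces have even Picard number.

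Next I would isolate the exceptional value. A Picard number of $8$ is equivalent to $\phi(m)=14$, so excluding $8$ amounts to showing that $14$ is a nontotient. I would prove this by the standard divisibility argument: if $\phi(m)=14$ then $7\mid\phi(m)=\prod_{p^a\|m}p^{a-1}(p-1)$, which forces either $49\mid m$ or a prime $p\equiv 1\pmod 7$ dividing $m$. The first possibility gives $\phi(m)\geq\phi(49)=42$, and the second gives $\phi(m)\geq 28$ (since $29$ is the least prime congruent to $1$ modulo $7$); either way $\phi(m)=14$ is impossible. Hence no K3 surface of BHK type has Picard number $8$.

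It then remains to realize every other even value. By the symmetry of the mirror construction, an adequate BHK pair whose polynomial has degree $m$ produces a mirror K3 surface of BHK type with Picard number $22-\phi(m)$; so it suffices, for each target $\phi(m)\in\{2,4,6,8,10,12,16,18,20\}$, to exhibit one adequate pair of a degree lying in the corresponding fibre of $\phi$ (for instance $m\in\{4,8,9,16,22,36,48,54,50\}$, giving $\rho\in\{20,18,16,14,12,10,6,4,2\}$). Each fibre contains several candidate degrees, and I would supply one realized degree apiece by reading off the tables of K3 surfaces of BHK type in \cite{ABS, CLPS, CP}. This is where the real effort lies, and I expect the main obstacle to be purely a matter of bookkeeping: verifying that the classification of invertible K3 polynomials does contain a surface for at least one degree in every $\phi$-fibre.

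Finally, for the positive-characteristic statement I would combine Theorem~\ref{main-intro-thm}(2) with Dirichlet's theorem on primes in arithmetic progressions. Fixing an admissible degree $m$ and choosing a prime $p\equiv -1\pmod m$ with $p\nmid d$ yields $\rho=22$, so that value is attained; choosing instead $p\equiv 1\pmod m$ forces $p^\ell\equiv 1\not\equiv -1\pmod m$ for every $\ell$ (as $m>2$), so the characteristic-zero value $22-\phi(m)$ persists. Together with the realizations above, this shows that in characteristic $p$ every even integer between $2$ and $22$ except $8$ occurs, completing the proof.
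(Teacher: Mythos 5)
Your proposal follows essentially the same route as the paper, whose entire argument is to read the possible values off Theorem \ref{main-intro-thm}, note that $14$ is a nontotient (so $22-\phi(m)=8$ never occurs), and realize every other even value by degrees appearing among the known examples in \cite{ABS, CLPS, CP}. The details you add --- the divisibility proof that $\phi(m)=14$ is impossible, and the Dirichlet-type choice of primes $p\equiv\pm1\pmod{m}$ with $p\nmid d$ in positive characteristic --- are correct fillings-in of steps the paper leaves implicit, not a different method.
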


The proof of Theorem \ref{main-intro-thm} proceeds by an analysis of the set $\mathfrak I_d(p)\cap G \subseteq (\Z/d)^4$, for a given BHK pair $(A,G)$.  For a certain subset $\mathfrak A_d\subseteq (\Z/d)^4$ (see \S\ref{Pic-no-sec}), the multiplicative group $(\Z/d)^\times$ acts upon $\mathfrak A_d\cap G$ by coordinatewise multiplication, and one may recast $\mathfrak I_d(p)\cap G \subseteq (\Z/d)^4$ as the union of those orbits of this action which possess a certain property.   (This property of the orbits is described in terms of certain special elements in $\mathfrak A_d$; see  \S\ref{orbit-sec}.  We note that this idea is also implicit in \cite[Cor.\ 2]{Shioda}.)  On the other hand, $\mathfrak I_d(p)\cap G$ also has a Hodge theoretic meaning, and in the context of K3 surfaces this allows one to deduce that there is at most one orbit having the aforementioned property (see Proposition \ref{exactly-one-prop}).  An explicit description of this one orbit then gives the theorem.

In the final section of the paper, we revisit the example from the end of \cite{Kelly}.

\section*{Acknowledgements}
We thank Tyler Kelly and Nathan Priddis for many helpful insights and suggestions on an earlier version of this paper.  The second author was partially supported by the 2015 Math Summer Research Program at California State University, Fullerton.

\section{BHK pairs}\label{const-sec}

In this section we fix $p$ to be either 0 or a (positive) prime integer.

\begin{defn}\label{A-defn}
Let $A=(A_{ij})_{0\leq i,j\leq 3} \in M_{4\times 4}(\Z)$.  We will say $A$ is a \emph{weighted Delsarte matrix} if it satisfies all of the following:
\begin{enumerate}
\item\label{A-pos} All entries of $A$ are nonnegative
\item\label{A-zero} Each row has at least one zero\item\label{A-det} We have $p\nmid \det(A)$ (and in particular $\det(A)\neq 0$)
\item\label{A-weights} The vector $A^{-1}(1,1,1,1)^T$ consists of positive entries
\end{enumerate}
\end{defn}

Now let $k$ denote an algebraically closed field of characteristic $p$.  Associated to $A$, we define the polynomial
\[
F_A = \sum_{i=0}^3 \prod_{j=0}^3 x_j^{A_{ij}} = \sum_{i=0}^3 x_0^{A_{i0}}x_1^{A_{i1}}x_2^{A_{i2}}x_3^{A_{i3}}.
\]
Notice that if $\lambda\in k^\times$ and ${\bf n} = (n_0,n_1,n_2,n_3)\in \Z^4$, then
\begin{eqnarray}
F_A(\lambda^{n_0}x_0,\lambda^{n_1}x_1,\lambda^{n_2}x_2,\lambda^{n_3}x_3) &=& \sum_{i=0}^3 \prod_{j=0}^3 (\lambda^{n_j}x_j)^{A_{ij}} \\
 &=& \sum_{i=0}^3 \left(\lambda^{\sum_{j=0}^3 A_{ij}n_j}\prod_{j=0}^3 x_j^{A_{ij}}\right) \\
 &=& \sum_{i=0}^3 \left(\lambda^{(A {\bf n}^T)_i}\prod_{j=0}^3 x_j^{A_{ij}}\right). \label{trans}
\end{eqnarray}
As a first consequence of this, let the vector in (\ref{A-weights}) be given by
\begin{equation}\label{eq1}
A^{-1}(1,1,1,1)^T = \left(\frac{q_0}{h},\frac{q_1}{h},\frac{q_2}{h},\frac{q_3}{h}\right),
\end{equation}
where each $q_i$ and $h$ are (positive) integers and $\gcd(q_0,q_1,q_2,q_3)=1$.  Putting ${\bf q}=(q_0,q_1,q_2,q_3)$, we have $A{\bf q}^T = (h,h,h,h)^T$ and thus by (\ref{trans}) the polynomial $F_A$ satisfies
\begin{equation}\label{homog}
F_A(\lambda^{q_0}x_0,\lambda^{q_1}x_1,\lambda^{q_2}x_2,\lambda^{q_3}x_3) = \lambda^h F_A(x_0,x_1,x_2,x_3)
\end{equation}
for all $\lambda\in k^\times$.  This says that $F_A$ is \emph{quasihomogeneous} of degree $h$ and weight system ${\bf q}=(q_0,q_1,q_2,q_3)$.

\begin{defn}\label{CY-defn}
We will say that the weighted Delsarte matrix $A$ satisfies the \emph{Calabi-Yau requirement} if (with notation as above) we have $h=\sum_{i=0}^3 q_i$.
\end{defn}

\begin{remark}\label{CY-remark}
We note by (\ref{eq1}) that $A$ satisfies the Calabi-Yau requirement if and only if the entries of $A^{-1}$ sum to 1.
\end{remark}

For the rest of this section, we assume that we are working with a Delsarte matrix $A$ that also satisfies the Calabi-Yau requirement. 

Next consider the following subgroup of $(k^\times)^4$:
\[
\Aut(F_A) = \set{(\lambda_0,\lambda_1,\lambda_2,\lambda_3)\in (k^\times)^4 \ | \ F(\lambda_0x_0,\lambda_1x_1,\lambda_2x_2,\lambda_3x_3) = F_A(x_0,x_1,x_2,x_3)}.
\]
One may use (\ref{A-det}) to show that $\Aut(F_A)$ is a subgroup of $\mu_d^4$, where $d$ is the smallest positive integer such that the matrix
\begin{equation}\label{B-d-defn}
B = d A^{-1}
\end{equation}
has integer entries and $\mu_d$ denotes the group of $d$th roots of unity in $k$.  Moreover, one may show that $\#\Aut(F_A) = \abs{\det(A)}$.

\begin{lem}
We have $h\mid d$, $d\mid {\det(A)}$, and $\det(A)\mid d^4$.  In particular, the primes dividing $d$ and $\det(A)$ are the same.
\end{lem}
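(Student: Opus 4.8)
The plan is to exploit the fact that $A$ has integer entries, so that its adjugate (classical adjoint) $\operatorname{adj}(A)$ is an integer matrix satisfying $\det(A)\,A^{-1}=\operatorname{adj}(A)$. First I would record a basic divisibility principle. Let $S$ denote the set of positive integers $m$ for which $mA^{-1}\in M_{4\times 4}(\Z)$; I claim $S$ consists precisely of the positive multiples of $d$. Indeed, every multiple of $d$ lies in $S$, since such an $mA^{-1}$ is an integer multiple of $B=dA^{-1}$. Conversely, if $m\in S$, then writing $g=\gcd(m,d)=am+bd$ via B\'ezout gives $gA^{-1}=a(mA^{-1})+b(dA^{-1})\in M_{4\times 4}(\Z)$, so $g\in S$; but $g\mid d$ and the minimality of $d$ force $g=d$, whence $d\mid m$. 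Since $\det(A)\,A^{-1}=\operatorname{adj}(A)$ is integral, $\lvert\det(A)\rvert\in S$, and the principle immediately yields $d\mid\det(A)$.

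For the relation $\det(A)\mid d^4$ I would simply take determinants of the integer matrix $B=dA^{-1}$. Using that $B$ is $4\times 4$ we get $\det(B)=d^4\det(A^{-1})=d^4/\det(A)$, which must be an integer because $B\in M_{4\times 4}(\Z)$; hence $\det(A)\mid d^4$. The final assertion, that $d$ and $\det(A)$ share the same prime divisors, is then immediate from the chain $d\mid\det(A)\mid d^4$: any prime dividing $d$ divides $\det(A)$, and any prime dividing $\det(A)$ divides $d^4$ and therefore divides $d$.

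To obtain $h\mid d$ I would apply the integer matrix $B=dA^{-1}$ to the integer vector $(1,1,1,1)^T$. By (\ref{eq1}) we have $B(1,1,1,1)^T=d\,A^{-1}(1,1,1,1)^T=(dq_0/h,\,dq_1/h,\,dq_2/h,\,dq_3/h)^T$, and this is an integer vector since $B$ is integral; thus $h\mid dq_i$ for every $i$. Because $\gcd(q_0,q_1,q_2,q_3)=1$, B\'ezout provides integers $c_0,\dots,c_3$ with $\sum_i c_iq_i=1$, and then $d=\sum_i c_i(dq_i)$ is divisible by $h$, giving $h\mid d$.

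None of these steps is genuinely difficult, so I do not expect a serious obstacle; the only point deserving care is the divisibility principle of the first paragraph, which is precisely what upgrades the a priori membership $\lvert\det(A)\rvert\in S$ into the sharper statement $d\mid\det(A)$. Likewise, the coprimality hypothesis $\gcd(q_0,q_1,q_2,q_3)=1$ is the essential ingredient in the $h\mid d$ step, since without it one could only conclude $h\mid d\cdot\gcd(q_0,q_1,q_2,q_3)$.
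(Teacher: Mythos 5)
Your proof is correct, and for two of the three divisibilities it follows essentially the paper's route: your ``divisibility principle'' is exactly the paper's characterization of $d$ as the positive generator of the ideal $\set{m\in\Z \mid mA^{-1}\in M_{4\times 4}(\Z)}$ (you just make the B\'ezout argument behind that explicit), and both you and the paper then feed the adjugate identity $\det(A)A^{-1}=\operatorname{adj}(A)$ into it to get $d\mid\det(A)$; similarly, your $h\mid d$ argument via $h\mid dq_i$ and $\gcd(q_0,q_1,q_2,q_3)=1$ is the explicit version of the paper's characterization of $h$ as generating the ideal $\set{m\in\Z \mid mA^{-1}(1,1,1,1)^T\in\Z^4}$. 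Where you genuinely diverge is the relation $\det(A)\mid d^4$: the paper deduces it group-theoretically, using that $\Aut(F_A)$ is a subgroup of $\mu_d^4$ of order $\abs{\det(A)}$ and invoking Lagrange's theorem, whereas you simply take the determinant of the integer matrix $B=dA^{-1}$ to get $\det(B)=d^4/\det(A)\in\Z$. Your version is more elementary and self-contained --- it avoids leaning on the order formula $\#\Aut(F_A)=\abs{\det(A)}$, which the paper states without proof --- while the paper's version ties the divisibility to the automorphism group it needs anyway for the geometric constructions that follow.
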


\begin{proof}
One may characterize $h$ as the positive generator of the following ideal in $\Z$:
\[
\set{m\in\Z \ | \ mA^{-1}(1,1,1,1)^T \in\Z^4}.
\]
By its definition in (\ref{B-d-defn}), $d$ belongs to this ideal and hence we conclude $h\mid d$.  Likewise, we may characterize $d$ as the positive generator of the ideal
\[
\set{m\in\Z \ | \ mA^{-1} \in M_{4\times 4}(\Z)},
\]
and as $\det(A)A^{-1}$ is the adjugate matrix of $A$, we must have $d\mid \det(A)$.  Finally, as $\Aut(F_A)$ has order $\abs{\det(A)}$, it must divide the order of the group $\mu_d^4$.  The concluding remark in the statement of the lemma follows from the fact that $d\mid \det(A)$ and $\det(A)\mid d^4$.
\end{proof}

  If we fix a primitive $d$th root of unity $\zeta\in k$ and select some ${\bf a}=(a_0,a_1,a_2,a_3)\in (\Z/d)^4$, then by (\ref{trans}) we find that
\[
(\zeta^{a_0},\zeta^{a_1},\zeta^{a_2},\zeta^{a_3}) \in \Aut(F_A) \quad \iff \quad A{\bf a}^T = 0^T \text{ in } \Z/d.
\]
Due to this, we will typically (but not always) use this additive notation:
\[
(\zeta^{a_0},\zeta^{a_1},\zeta^{a_2},\zeta^{a_3})\in \mu_d^4 \quad \longleftrightarrow \quad {\bf a}=(a_0,a_1,a_2,a_3)\in (\Z/d)^4:
\]
To reduce ambiguity between these notations, we will also use greek variables for the multiplicative notation and roman variables for the additive notation.

We now identify two subgroups of $\Aut(F_A)$.  First (in our additive notation) let
\[
{\bf j}_{A} = \frac{d}{h}{\bf q} = \left(\frac{q_0d}{h},\frac{q_1d}{h},\frac{q_2d}{h},\frac{q_3d}{h}\right).
\]
By (\ref{homog}), we see ${\bf j}_A\in \Aut(F_A)$, and we denote by $J_{F_A} = \inn{{\bf j}_A}$ the cyclic subgroup that it generates.  Second, define
\[
\SL(F_A) = \Big\{{\bf a}\in \Aut(F_A) \ \Big\vert \ \sum_{i=0}^3 a_i = 0\Big\}.
\]
(In the multiplicative notation, an element $(\lambda_0,\lambda_1,\lambda_2,\lambda_3)\in \Aut(F_A)$ belongs to $\SL(F_A)$ precisely when $\prod_{i=0}^3 \lambda_i = 1$; by viewing $(\lambda_0,\lambda_1,\lambda_2,\lambda_3)$ as a diagonal matrix, one better comprehends this widely-used notation for this subgroup of $\Aut(F_A)$.)  The Calabi-Yau condition on $A$ implies that $J_{F_A} \subseteq \SL(F_A)$.

\begin{defn}
A \emph{BHK pair} will be a pair $(A,G)$ such that:
\begin{enumerate}
\item $A$ is a weighted Delsarte matrix satisfying the Calabi-Yau condition.
\item $G$ is a subgroup satisfying $J_{F_A} \subseteq G\subseteq \SL(F_A)$.
\end{enumerate}
\end{defn}

\section{BHK surfaces of K3 type}\label{K3-sec}

From these algebraic ideas, we now pass to a more geometric setting, by using a BHK pair to form a hypersurface in weighted projective space.  A standard reference for weighted projective spaces is \cite{Dolg-WPS}.

Recall that $k$ is a field of characteristic $p\geq 0$.  We let $(A,G)$ denote a BHK pair and assume the following further conditions:
\begin{itemize}
\item The weight system ${\bf q} = (q_0,q_1,q_2,q_3)$ satisfies $\gcd(q_i,q_j,q_\ell) = 1$ whenever $0\leq i<j<\ell\leq 3$.
\item $p\nmid q_i$ for all $i$.
\item $p\nmid d$.
\end{itemize}
The polynomial $F_A$ defines a hypersurface $X_A$ (called a \emph{weighted Delsarte surface}) in the weighted projective space $\PP^3({\bf q}) = \PP^3(q_0,q_1,q_2,q_3)$ of degree $h$, and $X_A$ has at most cyclic quotient singularities.  We assume that $X_A$ is:
\begin{itemize}
\item \emph{quasi-smooth}, meaning that the affine cone over $X_A$ has the origin as its only singular point, and
\item \emph{well-formed}, meaning that the intersection of the (one-dimensional) singular locus of $\PP^3({\bf q})$ with $X_A$ is zero-dimensional or empty.
\end{itemize}
Due to this, and the fact that $A$ satisfies the Calabi-Yau requirement, the minimal resolution $\tilde X_A$ will be a K3 surface (see \cite{Goto-symp}).

The group $\Aut(F_A)$ acts upon points of $X_A$ via
\[
(\lambda_0,\lambda_1,\lambda_2,\lambda_3)\cdot(x_0:x_1:x_2:x_3) = (\lambda_0x_0:\lambda_1x_1:\lambda_2x_2:\lambda_3x_3),
\]
and this extends to an action upon $\tilde X_A$.  Note that the action of the subgroup $J_{F_A}$ is trivial.  Moreover one may show (see \cite[Prop.\ 1]{ABS}) that $\SL(F_A)$ is precisely the subgroup of elements in $\Aut(F_A)$ that act \emph{symplectically} upon $\tilde X_A$, i.e., that act trivially on the canonical bundle of $\tilde X_A$.  Hence the group $G$, which satisfies $J_{F_A}\subseteq G\subseteq \SL(F_A)$, also acts symplectically on $\tilde X_A$.  Since $p\nmid d$, the minimal resolution $Z_{A,G}$ of the quotient surface $\tilde X_A/G$ will again be a K3 surface; see \cite{Goto-symp} for more details.

Hence, under certain conditions, a BHK pair $(A,G)$ gives rise to the corresponding geometric object $Z_{A,G}$.  We summarize all of this with the following:

\begin{defn}
Let $(A,G)$ be a BHK pair such that $\gcd(q_i,q_j,q_\ell)=1$ for any three distinct weights in ${\bf q}$, $p\nmid q_i$ for all $i$, and $p\nmid d$.  If the hypersurface $X_A$ in $\PP^3({\bf q})$ is quasi-smooth and well-formed, we will call $(A,G)$ an \emph{adequate BHK pair}.  Furthermore, if $Z_{A,G}$ denotes the K3 surface obtained as the minimal resolution of $\tilde X_A/G$, then $Z_{A,G}$ is called a \emph{K3 surface of BHK type}.
\end{defn}

\begin{lem}\label{lifting-lemma}
Suppose that the BHK pair $(A,G)$ is an adequate BHK pair over a field of characteristic $p>0$, i.e., that it gives rise to a K3 surface $Z_{A,G}$ in characteristic $p>0$.  Then the same is true over a field of characteristic 0.
\end{lem}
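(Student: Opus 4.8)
The plan is to notice that almost every condition defining an adequate BHK pair is a purely arithmetic-combinatorial condition on the integer matrix $A$ and the group $G$, hence insensitive to the characteristic; the only genuinely geometric conditions requiring attention are the quasi-smoothness and well-formedness of $X_A$. First I would dispose of the combinatorial conditions. That $A$ has nonnegative entries, that each row contains a zero, that $\det(A)\neq 0$, that $A^{-1}(1,1,1,1)^T$ is positive, that $A$ satisfies the Calabi-Yau requirement, and that $\gcd(q_i,q_j,q_\ell)=1$ for distinct weights are all statements about $A$ as an integer matrix, and they hold verbatim in characteristic $0$. The divisibility hypotheses $p\nmid\det(A)$, $p\nmid q_i$, and $p\nmid d$ become vacuous when $p=0$. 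Finally, because $p\nmid d$ the group $\mu_d$ has exactly $d$ elements, so $\Aut(F_A)$, $J_{F_A}$, and $\SL(F_A)$ carry the additive descriptions $\{{\bf a}\in(\Z/d)^4 \mid A{\bf a}^T\equiv 0\}$, $\inn{\tfrac{d}{h}{\bf q}}$, and $\{{\bf a}\in\Aut(F_A)\mid \sum_i a_i=0\}$, none of which mentions the characteristic; hence the same subgroup $G$ again satisfies $J_{F_A}\subseteq G\subseteq\SL(F_A)$ in characteristic $0$.

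For well-formedness I would argue combinatorially. The singular locus of $\PP^3({\bf q})$ is a union of coordinate points and coordinate lines governed solely by the gcd's of the weights, and (as $p\nmid q_i$) it is the same in both characteristics. Since every monomial of $F_A$ has coefficient $1$, no cancellation can occur, so the restriction of $F_A$ to a coordinate line $L$ vanishes identically precisely when no monomial of $F_A$ is supported away from the two coordinates vanishing on $L$ — a condition depending on $A$ alone. Thus $L\not\subseteq X_A$, and hence $X_A\cap L$ is zero-dimensional, under the same criterion in either characteristic, and well-formedness transfers.

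The crux is quasi-smoothness, which I would establish by specialization over $\Spec\Z$. By the weighted Euler relation
\[
\sum_{j=0}^3 q_j x_j \frac{\partial F_A}{\partial x_j} = h\,F_A,
\]
together with $p\nmid h$ (which follows from $h\mid d$ and $p\nmid d$), quasi-smoothness of $X_A$ is equivalent to the partial derivatives $\partial F_A/\partial x_0,\dots,\partial F_A/\partial x_3$ having no common zero in $\PP^3({\bf q})$. Now regard $F_A$ as having coefficients in $\Z$, so that the common vanishing locus $S$ of these partials is a closed subscheme of $\PP^3({\bf q})$ over $\Spec\Z$. Since weighted projective space is proper over $\Spec\Z$, the image $B\subseteq\Spec\Z$ of $S$ is closed, and $B$ consists exactly of those primes over which $X_A$ fails to be quasi-smooth. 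By hypothesis $(p)\notin B$, so $B$ is a proper closed subset of $\Spec\Z$; but every proper closed subset of $\Spec\Z$ is a finite set of closed points and therefore omits the generic point. Thus the partials have no common zero over $\Q$, and hence — this being the emptiness of a scheme, which is stable under field extension — over any algebraically closed field of characteristic $0$, so $X_A$ is quasi-smooth there. With all conditions verified, the general construction then produces the K3 surface $Z_{A,G}$ in characteristic $0$. I expect the only real obstacle to be the justification that $B$ is closed, via properness of $\PP^3({\bf q})$ over $\Z$; once that is in hand, the special topology of $\Spec\Z$ — every nonempty open set contains the generic point — finishes the argument essentially for free.
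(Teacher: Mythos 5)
Your proof is correct, and while it shares the paper's overall strategy of spreading $X_A$ out over $\Spec\Z$ and playing the fiber over $p$ against the generic fiber, the implementation of both key steps differs genuinely from the paper's. The paper treats well-formedness and quasi-smoothness uniformly: it forms the subschemes $\mathfrak Y$ (singular locus of the affine cone over $\Z$) and $\mathfrak Z$ (intersection of $\mathfrak X_A$ with the singular locus of $\PP^3_\Z({\bf q})$) and argues in both cases that zero-dimensionality of the fiber over $p$ forces zero-dimensionality of the generic fiber ``by semi-continuity.'' You instead handle well-formedness by a purely combinatorial argument --- the singular locus of $\PP^3({\bf q})$ is the same union of coordinate points and lines in both characteristics (using $p\nmid q_i$), and containment of a coordinate line in $X_A$ is equivalent to a support condition on the monomials of $F_A$, with no cancellation possible since the rows of $A$ are distinct and all coefficients equal $1$ --- so for you this condition is outright characteristic-independent, which is stronger than a one-way transfer. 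For quasi-smoothness you use the weighted Euler relation (valid since $p\nmid h$, as $h\mid d$) to replace the singular locus of the cone by the common zero locus of the partials inside $\PP^3_\Z({\bf q})$, and then invoke properness of weighted projective space over $\Spec\Z$ to get a closed image in $\Spec\Z$ that misses $(p)$, hence misses the generic point. This is arguably tighter than the paper at exactly this step: upper semi-continuity of fiber dimension on the \emph{target} is automatic only for proper morphisms, and the affine cone $\mathfrak X_A'$ is not proper over $\Z$, so the paper's appeal to semi-continuity there implicitly relies on the conical structure (every component of $\mathfrak Y$ meets the zero section, which supplies the specialization from the generic to the special point); your route through the proper scheme $\PP^3_\Z({\bf q})$ makes the closed-image argument immediate and avoids that subtlety. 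You also explicitly verify that the arithmetic conditions (weighted Delsarte, Calabi--Yau, the gcd condition on weights, and $J_{F_A}\subseteq G\subseteq \SL(F_A)$ via the additive description) carry over, which the paper leaves tacit.
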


\begin{proof}
Put briefly, this follows from the fact that ``well-formed'' and ``quasi-smooth'' are open conditions.  To see this, notice that $F_A\in \Z[x_0,x_1,x_2,x_3]$ cuts out a codimension one subscheme $\mathfrak X_A' \subseteq \Spec \Z[x_0,x_1,x_2,x_3]$.  Likewise, if $\Z[x_0,x_1,x_2,x_3]$ is graded so that each $x_i$ has weight $q_i$, then $F_A$ cuts out the codimension one subscheme $\mathfrak X_A \subseteq \PP_\Z^3({\bf q}) = \Proj \Z[x_0,x_1,x_2,x_3]$.  Note that $\mathfrak X_A'$ is the affine cone over $\mathfrak X_A$.

Now assume that $(A,G)$ defines the K3 surface $Z_{A,G}$ over a field of characteristic $p>0$.  
Let $\mathfrak Y\subseteq \mathfrak X_A'$ denote the singular locus of $\mathfrak X_A'$.  The fact that $Z_{A,G}$ is quasi-smooth means that the fiber of $\mathfrak Y\to\Spec \Z$ over $p$ is zero-dimensional.  Hence by semi-continuity, the same is true about the generic fiber; in other words, $(A,G)$ defines a quasismooth surface in characteristic 0.

Likewise, if $\mathfrak Z\subseteq \mathfrak X_A$ denotes the intersection of $\mathfrak X_A$ with the singular locus of $\PP^3_\Z({\bf q})$, the hypothesis of the lemma implies that the fiber of $\mathfrak Z\to\Spec \Z$ over $p$ is either zero-dimensional or empty, and so the same must be true of the generic fiber.  Hence $(A,G)$ defines a well-formed surface in characteristic zero as well.
\end{proof}

\begin{remark}\label{KS-remark} While they will not play a significant role in the proof of Theorem \ref{main-thm}, there are two other facets of an adequate BHK pair $(A,G)$ worth noting here.

First, in \cite[Theorem 1]{KS}, Kreuzer and Skarke classify those weighted Delsarte matrices $A$ such that the corresponding hypersurface $X_A$ will be quasismooth.  (Note while the result in \cite{KS} is stated for characteristic zero, the proof of Lemma \ref{lifting-lemma} shows how their results hold in all characteristics.)  Their classification shows $X_A$ is quasismooth if and only if the polynomial $F_A$ is a direct sum of so-called \emph{atomic types}, of which there are three:

\medskip

\begin{tabular}{ll}
{Fermat}: & $y^e$ \\
{Loop}: & $y_0^{e_0}y_1+y_1^{e_1}y_2+\cdots+y_k^{e_k}y_0$ \\
Chain: & $y_0^{e_0}y_1+y_1^{e_1}y_2+\cdots+y_{k-1}^{e_{k-1}}y_k+y_k^{e_k}$
\end{tabular}

\medskip

The second point is that, given the assumptions made about the weight system ${\bf q}=(q_0,q_1,q_2,q_3)$ at the beginning of this section, ${\bf q}$ will belong to one of 95 possible weight systems (counted up to reordering the $q_i$); see \cite{Reid, Yonemura}.  Taken together with the first point, this significantly restricts the possibilities for the matrix $A$ in an adequate BHK pair.  (On the other hand, their number is still quite large, as demonstrated by the partial lists of examples in \cite{ABS, CLPS, CP}.)
\end{remark}

\section{BHK mirrors}\label{mirror-sec}

Let $(A,G)$ be an adequate BHK pair.  Since $X_A$ is then quasismooth, the classification result of Kreuzer and Skarke mentioned in Remark \ref{KS-remark} implies that its transpose $A^T$ is also a weighted Delsarte matrix. Furthermore, Remark \ref{CY-remark} shows that $A^T$ will also satisfies the Calabi-Yau requirement.  Just as before, one obtains from $A^T$ a polynomial $F_{A^T}$, which will be quasihomogeneous of some degree $h_T$ and weight system ${\bf q}_T$.  (We note that, in general, the degree and weight system of $F_{A^T}$ will differ from that of $F_A$.)  Just as with $F_A$, one may define the groups
\[
J_{F_{A^T}} \subseteq \SL(F_{A^T}) \subseteq \Aut(F_{A^T}).
\]
The group $\Aut(F_{A^T})$ is a subgroup of $\mu_d^4$ (for the same value of $d$ as in \S\ref{const-sec}) and again we have $\#\Aut(F_{A^T}) = \abs{\det(A)}$.  Alternatively, in the additive notation, we may view $\Aut(F_{A^T})$ as the following subgroup of $(\Z/d)^4$:
\[
\Aut(F_{A^T}) = \set{{\bf a}\in (\Z/d)^4 \ | \ {\bf a}A= (A^T{\bf a}^T)^T = 0} 
\]

Let us now use the additive notation to define a pairing
\[
\inn{ \cdot,\cdot}: \Aut(F_{A^T})\times \Aut(F_A) \to \Z/d^2.
\]
This pairing is defined by the rule
\[
\inn{{\bf a},{\bf b}} = \tilde{\bf a}A\tilde{\bf b}^T,
\]
where $\tilde{\bf a}, \tilde{\bf b}$ are any lifts of ${\bf a}, {\bf b}$ to the group $(\Z/d^2)^4$; one may verify that $\inn{{\bf a},{\bf b}}$ is independent of the choices of these lifts, by using the fact that ${\bf a}A = 0$ and $A{\bf b}^T=0$ in $(\Z/d)^4$.  This pairing is $\Z/d^2$-bilinear, if one regards $g\in\Z/d^2$ as acting upon each of the factors $\Aut(F_A)$ and $\Aut(F_{A^T})$ via scalar multiplication after applying the reduction map $\Z/d^2\to \Z/d$.

From the adequate BHK pair $(A,G)$ above, one may now define the \emph{dual group} $G^T$ as
\[
G^T = \set{{\bf a}\in \Aut(F_{A^T}) \ | \ \inn{{\bf a},{\bf b}} = 0 \text{ for all } {\bf b}\in G}.
\]
One may check that this is equivalent to the definition of $G^T$ in \cite{Kelly}.  (Also see \cite[\S3]{ABS} for a discussion of several equivalent definitions of $G^T$.)  Some useful facts about this construction are that $(G^T)^T=G$, $\Aut(F_A)^T=\set{0}$, $J_{F_A}^T=\SL(F_{A^T})$, and $J_{F_{A^T}}\subseteq G^T \subseteq \SL(F_{A^T})$.  (Note that the latter two facts require the Calabi-Yau assumption and the fact that $J\subseteq G\subseteq \SL(F_A)$.)

From the discussion above, the pair $(A^T, G^T)$ will automatically be a BHK pair, and the surface $X_{A^T}$ will be quasismooth.  Hence $(A^T,G^T)$ will be an adequate BHK pair if and only if $p\nmid q_{i,T}$ (where ${\bf q}_T = (q_{0,T},q_{1,T},q_{2,T},q_{3,T})$) and $X_{A^T}$ is also well-formed.%

\begin{defn}
Let $(A,G)$ be an adequate BHK pair.  If the BHK pair $(A^T,G^T)$ is also an adequate BHK pair, then we will call it the \emph{BHK mirror pair} of $(A,G)$.  Furthermore, we will say that the K3 surface $Z_{A^T,G^T}$ is the \emph{BHK mirror surface} of the surface $Z_{A,G}$.
\end{defn}

\section{Picard numbers of K3 surfaces of BHK type}\label{Pic-no-sec}

Let $(A,G)$ be a BHK pair.  We will define several subsets of the additive group $(\Z/d)^4$:
\begin{eqnarray}
M_d &=& \set{{\bf a} = (a_0,a_1,a_2,a_3)\in(\Z/d)^4 \ \Big\vert \ \sum_{i=0}^3 a_i = 0} \label{Md-defn} \\
\mathfrak A_d &=& \set{{\bf a}\in M_d \ | \ a_i \neq 0 \text{ for all } 0\leq i\leq 3} \label{Ad-defn} 
\end{eqnarray}
Then $M_d$ is a subgroup of $(\Z/d)^4$, while $\mathfrak A_d$ is merely a subset that is closed under scalar multiplication by $(\Z/d)^\times$. We define the following function upon $\mathfrak A_d$:

\begin{defn}
The \emph{age} of ${\bf a}=(a_0,a_1,a_2,a_3)\in \mathfrak A_d$ is the integer
\[
S({\bf a}) = \sum_{i=0}^3 \inn{\frac{\hat a_i}{d}},
\]
where $\hat a_i\in \Z$ is any lift of $a_i$ from $\Z/d$ to $\Z$, and $\inn{x} = x-\lfloor x \rfloor$ denotes the fractional part of $x\in\R$.
\end{defn}

\begin{remark}
In the literature surrounding BHK mirror symmetry, the concept of {age} is typically defined for any element of $\Aut(F_A)$.  The notion is also useful in other contexts (for instance, see \cite{IR}).  For our purposes, we will only need to consider it for elements in $\mathfrak A_d$.
\end{remark}

The following properties of $S({\bf a})$ are straightforward to verify:

\begin{prop}\label{neg-prop}
For ${\bf a}\in \mathfrak A_d$ we have:
\begin{enumerate}
\item $S({\bf a}) \in\set{1,2,3}$
\item $S(-{\bf a}) = 4-S({\bf a})$
\end{enumerate}
\end{prop}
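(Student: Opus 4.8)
The plan is to verify both statements by a direct computation with fractional parts, the only two ingredients being the defining condition of $\mathfrak A_d$ (every coordinate is nonzero) and that of $M_d$ (the coordinates sum to zero in $\Z/d$). First I would fix, for each $i$, the canonical lift $\hat a_i\in\{1,\dots,d-1\}$, which is possible precisely because $a_i\neq 0$ in $\Z/d$; then $\inn{\hat a_i/d}=\hat a_i/d$ lies strictly in the open interval $(0,1)$, so summing over the four coordinates gives $0<S({\bf a})<4$. (I would remark in passing that $S({\bf a})$ does not depend on the chosen lifts, since altering $\hat a_i$ by a multiple of $d$ shifts $\hat a_i/d$ by an integer and hence leaves $\inn{\hat a_i/d}$ unchanged; this is immediate and I would not dwell on it.)

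The crux of part (1) is \emph{integrality}. Because $\sum_{i=0}^3 a_i=0$ in $\Z/d$, any lifts satisfy $\sum_{i=0}^3\hat a_i\equiv 0\pmod d$, so $\sum_{i=0}^3 \hat a_i/d$ is an integer. Writing $\inn{x}=x-\lfloor x\rfloor$ and summing, I get $S({\bf a})=\sum_{i=0}^3 \hat a_i/d-\sum_{i=0}^3\lfloor \hat a_i/d\rfloor$, a difference of integers and hence itself an integer. Combined with the strict bounds $0<S({\bf a})<4$ from the previous step, this forces $S({\bf a})\in\{1,2,3\}$.

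For part (2) I would invoke the elementary identity $\inn{-x}=1-\inn{x}$, valid whenever $x\notin\Z$. Since each $a_i\neq 0$, the quantity $\hat a_i/d$ is never an integer, so this identity applies coordinatewise to $-{\bf a}=(-a_0,-a_1,-a_2,-a_3)$; note also that each $-a_i\neq 0$, so $-{\bf a}\in\mathfrak A_d$ and $S(-{\bf a})$ is defined. Summing then yields $S(-{\bf a})=\sum_{i=0}^3\bigl(1-\inn{\hat a_i/d}\bigr)=4-S({\bf a})$.

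I do not anticipate any genuine obstacle: the statement is essentially a bookkeeping exercise, and the two hypotheses defining $\mathfrak A_d\subseteq M_d$ map cleanly onto the two things to be checked — the nonvanishing of each coordinate supplies the strict bounds (and the applicability of $\inn{-x}=1-\inn{x}$), while the vanishing of the coordinate sum supplies integrality. The single point worth stating carefully is that both the strictness of the bound and the fractional-part reflection identity would fail if some $a_i$ were zero, which is exactly why the age is defined only on $\mathfrak A_d$ rather than on all of $M_d$.
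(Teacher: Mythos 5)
Your proof is correct and complete: the strict bounds from nonzero coordinates plus integrality from the $M_d$ condition give part (1), and the reflection identity $\inn{-x}=1-\inn{x}$ for $x\notin\Z$ gives part (2). The paper offers no proof (it declares the proposition ``straightforward to verify''), and your argument is precisely the standard verification being alluded to, so there is nothing further to compare.
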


Now define
\[
\mathfrak B_d(0) = \set{{\bf a}\in \mathfrak A_d \ | \ S(t{\bf a}) = 2 \text{ for all } t\in (\Z/d)^\times}.
\]
If $p>0$ is a prime such that $p\nmid d$, let $f$ denote the order of the (congruence class of) $p$ in the multiplicative group $(\Z/d)^\times$.  We also define
\[
\mathfrak B_d(p) = \Big\{{\bf a}\in \mathfrak A_d \ \Big\vert \ \sum_{j=0}^{f-1} S(tp^j{\bf a}) = 2 \text{ for all } t\in (\Z/d)^\times\Big\}.
\]
Finally, for $p=0$ or a prime $p\nmid d$, let
\[
\mathfrak I_d(p) = \mathfrak A_d\setminus \mathfrak B_d(p).
\]

The main theorem of \cite{Kelly} is:

\begin{thm}[Kelly]\label{Kelly-thm}
Let $(A,G)$ and $(A^T,G^T)$ be BHK mirror pairs.  Over a field of characteristic $p=0$ or $p\nmid d$, the geometric Picard numbers of the BHK mirror surfaces $Z_{A,G}$ and $Z_{A^T,G^T}$ are given by
\begin{eqnarray*}
\rho(Z_{A,G}) &=& 22-\#(\mathfrak I_d(p)\cap G^T) \\
\rho(Z_{A^T,G^T}) &=& 22-\#(\mathfrak I_d(p)\cap G)
\end{eqnarray*}
\end{thm}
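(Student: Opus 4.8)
The plan is to place $Z_{A,G}$ inside Shioda's framework for Delsarte surfaces and then translate every ingredient of his Picard number formula into the combinatorial language of the BHK pair $(A,G)$. First I would exhibit the weighted Delsarte surface $X_A$ — and hence its resolution and the quotient producing $Z_{A,G}$ — as dominated by the Fermat surface $X_d\subseteq\PP^3$ of degree $d$, using the integral matrix $B=dA^{-1}$ from (\ref{B-d-defn}) to define the dominant rational map $(y_0:\cdots:y_3)\mapsto(\prod_i y_i^{B_{i0}}:\cdots:\prod_i y_i^{B_{i3}})$. This presents $Z_{A,G}$, up to birational equivalence and resolution, as a quotient of $X_d$ by a finite abelian group assembled from the Delsarte/weighting data together with $G$. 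Since the transcendental cohomology is a birational invariant of surfaces and $b_2(Z_{A,G})=22$, it then suffices to compute the rank of the transcendental part of the cohomology of this quotient and subtract it from $22$.

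The second step is to invoke the classical eigenspace decomposition of the primitive cohomology of $X_d$. Under the character group action, $H^2_{\mathrm{prim}}(X_d)$ splits into one-dimensional eigenspaces $V({\bf a})$ indexed precisely by ${\bf a}\in\mathfrak A_d$, the vectors in $(\Z/d)^4$ all of whose coordinates are nonzero and which sum to zero, as in (\ref{Ad-defn}). The Hodge type of $V({\bf a})$ is governed by the age: one has $V({\bf a})\subseteq H^{1,1}$ exactly when $S({\bf a})=2$, while $S({\bf a})=1$ and $S({\bf a})=3$ give the $(2,0)$ and $(0,2)$ pieces (Proposition \ref{neg-prop} reflects the symmetry under ${\bf a}\mapsto-{\bf a}$ coming from complex conjugation). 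Because the absolute Galois group (resp.\ Frobenius) permutes the $V({\bf a})$ by ${\bf a}\mapsto t{\bf a}$ for $t\in(\Z/d)^\times$ (resp.\ ${\bf a}\mapsto p{\bf a}$), an eigenspace yields an algebraic class only when its entire orbit is of type $(1,1)$; in characteristic $0$ this is the condition $S(t{\bf a})=2$ for all $t$, i.e.\ ${\bf a}\in\mathfrak B_d(0)$, so the transcendental cohomology of $X_d$ is spanned by the $V({\bf a})$ with ${\bf a}\in\mathfrak I_d(0)$. In characteristic $p\nmid d$ the same count holds with $\mathfrak B_d(p)$ in place of $\mathfrak B_d(0)$: here I would appeal to the Tate conjecture for Fermat surfaces, under which $V({\bf a})$ is algebraic precisely when its Jacobi-sum Frobenius eigenvalue is $q$ times a root of unity, and then use Stickelberger's theorem to identify the $p$-adic valuation of that Jacobi sum with the Frobenius-orbit age sum $\sum_{j=0}^{f-1}S(tp^j{\bf a})$, recovering the defining condition for $\mathfrak B_d(p)$.

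The third step is to pass from $X_d$ to the quotient presenting $Z_{A,G}$ and to determine which eigenspaces survive. Taking invariants commutes with the eigenspace decomposition, and the exceptional divisors of the resolutions, the imprimitive characters (those with a vanishing coordinate), and the polarization class all contribute only algebraic cycles; hence the transcendental cohomology of $Z_{A,G}$ is spanned by those $V({\bf a})$ with ${\bf a}\in\mathfrak I_d(p)$ that are fixed by the covering group. The crucial point is that this invariance is exactly membership in the dual group: using the pairing $\inn{\cdot,\cdot}$ of \S\ref{mirror-sec}, the class $V({\bf a})$ is $G$-invariant if and only if $\inn{{\bf a},{\bf b}}=0$ for all ${\bf b}\in G$, which is the definition of $G^T$. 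This gives
\[
\rho(Z_{A,G}) = 22-\#(\mathfrak I_d(p)\cap G^T),
\]
and the companion formula for $\rho(Z_{A^T,G^T})$ follows by exchanging the roles of the mirror pair and using $(G^T)^T=G$.

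The main obstacle I anticipate is the third step — establishing that the $G$-fixed transcendental characters are governed by the \emph{dual} group $G^T$ rather than by $G$ itself. This is the heart of the mirror phenomenon, and it demands carefully matching the action of the covering group of $X_d\to Z_{A,G}$ on each $V({\bf a})$ with the bilinear pairing defining $G^T$, while keeping track of the distinction between $\Aut(F_A)$ and $\Aut(F_{A^T})$ under which the relevant characters naturally live. A secondary difficulty, needed to make the eigenspace count exact, is the verification that resolving the cyclic quotient singularities produces only $(1,1)$-classes and that no transcendental cohomology is created or destroyed in passing among $X_A$, $\tilde X_A$, and the quotient — a point where the quasi-smooth and well-formed hypotheses and the assumption $p\nmid d$ are essential.
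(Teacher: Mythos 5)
This theorem is not proved in the paper at all: it is Kelly's result, quoted from \cite{Kelly}, and the paper merely recalls its ingredients in the section on quotients of Fermat surfaces (the birational model $Y_d/\Gamma$ from \cite{Kelly2}, the eigenspace decomposition of $H^2(Y_d,\C)$ indexed by $\mathfrak A_d$, and Kelly's identification $L(\Gamma)=G^T\cap M_d$). Your outline reconstructs essentially that same cited argument --- Shioda's Fermat-dominance machinery, the age criterion for algebraicity (Lefschetz $(1,1)$ in characteristic $0$; the Tate conjecture for Fermat surfaces plus Stickelberger in characteristic $p$), and the dual-group lemma for the $\Gamma$-invariant characters, which is exactly the step you correctly flag as the crux (it is Kelly's Proposition 3, asserted here rather than proved). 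One small correction: since $AB=dI$, the monomial map must be built from the rows of $B$, i.e.\ $x_j=\prod_\ell y_\ell^{B_{j\ell}}$, not from its columns as written, which would instead require $AB^T=dI$.
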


\section{Quotients of Fermat surfaces}

In this section, all varieties will be defined over a field of characteristic 0.  We will let
\[
Y_d = V(X_0^d+X_1^d+X_2^d+X_3^d)
\]
denote the Fermat surface of degree $d$ in $\PP^3$.

The group $\mu_d^4$ acts upon $Y_d$ by multiplication of each coordinate:
\[
(\lambda_0,\lambda_1,\lambda_2,\lambda_3)\cdot(X_0:X_1:X_2:X_3) = (\lambda_0X_0:\lambda_1X_1:\lambda_2X_2:\lambda_3X_3).
\]
Letting $\Delta\hookrightarrow\mu_d^4$ denote the diagonal subgroup, we obtain in this way an injection $\mu_d^4/\Delta \hookrightarrow \Aut(Y_d)$.  By functoriality we obtain a representation of $\mu_d^4/\Delta$ upon the singular cohomology group $H^2(Y_d,\Q)$.

We note that the group $\Hom(\mu_d^4/\Delta,\C^\times)$ of characters of $\mu_d^4/\Delta$ may be identified with the group $M_d$ defined in (\ref{Md-defn}).  In particular,
\[
{\bf a} = (a_0,a_1,a_2,a_3)\in M_d \quad \implies \quad {\bf a}\big(\big(\lambda_0,\lambda_1,\lambda_2,\lambda_3)\Delta\big) = \prod_i \lambda_i^{a_i}.
\]
Hence upon extending scalars to $\C$, we obtain a decomposition of the middle singular cohomology of $Y_d$:
\[
H^2(Y_d,\C) = \bigoplus_{{\bf a}\in M_d} V({\bf a}),
\]
where $\mu_d^4/\Delta$ acts via the character ${\bf a}$ on $V({\bf a})$.  Recalling the subset $\mathfrak A_d\subseteq M_d$ in (\ref{Ad-defn}), one can show (see \cite{Shioda-Fermat}, for instance) that
\[
\dim_{\C} V({\bf a}) = 
\begin{cases}
1 & \text{ if } {\bf a}=0 \text{ or } {\bf a}\in\mathfrak A_d \\
0 & \text{ otherwise},
\end{cases}
\]
so that $H^2(Y_d,\C)$ becomes the following direct sum of one-dimensional spaces:
\[
H^2(Y_d,\C) = V(0)\oplus \bigoplus_{{\bf a}\in \mathfrak A_d} V({\bf a}).
\]
Moreover, as this decomposition arises from a group of automorphisms of $Y_d$, each of the subspaces $V({\bf a})$ is a Hodge structure (over $\C$) and one can ask about its Hodge type.  

\begin{prop}\label{S-Hodge-prop}
If ${\bf a}\in \mathfrak A_d$, then $V({\bf a})\subseteq H^{2-q,q}(Y_d,\C)$ if and only if $S({\bf a}) = q+1$.  Furthermore $V(0)\subseteq H^{1,1}(Y_d,\C)$.
\end{prop}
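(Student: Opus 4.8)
The plan is to use Griffiths' theory of the Hodge filtration on the cohomology of a smooth projective hypersurface, via the pole-order filtration and the Jacobian ring, applied to the Fermat surface $Y_d\subset\PP^3$. This identifies each Hodge piece of the primitive cohomology with a graded component of the Jacobian ring, where the $\mu_d^4/\Delta$-action is visible directly on explicit monomial representatives; one can then read off both the eigencharacter $\mathbf a$ and the age $S(\mathbf a)$ from the exponent vector of the monomial.

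First I would recall the residue description of $H^2_{\mathrm{prim}}(Y_d,\C)$. Writing $F = X_0^d+\cdots+X_3^d$ and $\Omega = \sum_i(-1)^iX_i\,dX_0\wedge\cdots\wedge\widehat{dX_i}\wedge\cdots\wedge dX_3$, Griffiths' theorem gives an isomorphism
\[
H^{2-q,q}_{\mathrm{prim}}(Y_d,\C)\;\cong\;R_{(q+1)d-4},
\]
where $R=\C[X_0,\dots,X_3]/J_F$ is the Jacobian ring and $R_m$ its degree-$m$ piece; the class attached to a homogeneous $P$ of degree $(q+1)d-4$ is the residue of $P\,\Omega/F^{q+1}$. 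For the Fermat polynomial the Jacobian ideal is $J_F=(X_0^{d-1},\dots,X_3^{d-1})$, so $R$ has a monomial basis given by $X^{\mathbf b}=\prod_iX_i^{b_i}$ with $0\le b_i\le d-2$, and those of degree $(q+1)d-4$ are exactly the ones with $\sum_ib_i=(q+1)d-4$. Next I would compute the group action: for $g=(\lambda_0,\dots,\lambda_3)\in\mu_d^4$ acting by $X_i\mapsto\lambda_iX_i$, one has $g^*F=F$ (as $\lambda_i^d=1$), $g^*\Omega=(\prod_i\lambda_i)\Omega$, and $g^*(X^{\mathbf b})=(\prod_i\lambda_i^{b_i})X^{\mathbf b}$, so the residue of $X^{\mathbf b}\Omega/F^{q+1}$ is an eigenvector with character $\prod_i\lambda_i^{b_i+1}$. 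In additive notation this exhibits it as a generator of $V(\mathbf a)$ with $\mathbf a\equiv\mathbf b+(1,1,1,1)\pmod d$. Since $0\le b_i\le d-2$, each $b_i+1$ lies in $\{1,\dots,d-1\}$, which both confirms $\mathbf a\in\mathfrak A_d$ and identifies $\hat a_i:=b_i+1$ as the distinguished lift, so that $\inn{\hat a_i/d}=(b_i+1)/d$.

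Finally I would read off the age:
\[
S(\mathbf a)\;=\;\sum_{i=0}^3\frac{b_i+1}{d}\;=\;\frac{\big(\sum_ib_i\big)+4}{d}\;=\;\frac{(q+1)d-4+4}{d}\;=\;q+1,
\]
which is precisely the asserted equivalence $V(\mathbf a)\subseteq H^{2-q,q}\iff S(\mathbf a)=q+1$; the equivalence is genuine (not just one implication) because each $V(\mathbf a)$ is one-dimensional, hence of pure Hodge type. The claim for $V(0)$ follows separately: the trivial-character eigenspace is the $\mu_d^4/\Delta$-invariant part of $H^2$, which by the Lefschetz decomposition is spanned by the hyperplane class and therefore lies in $H^{1,1}$.

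The step I expect to be the main obstacle is not the computation but the bookkeeping of conventions: whether the action is by $\lambda_i$ or $\lambda_i^{-1}$, and whether cohomology carries the contravariant or covariant action, determines whether the character is $\mathbf b+(1,1,1,1)$ or its negative, and hence whether one obtains $S(\mathbf a)=q+1$ or $S(\mathbf a)=3-q$. I would pin this down using the internal consistency check furnished by Proposition \ref{neg-prop}: complex conjugation sends $V(\mathbf a)$ to $V(-\mathbf a)=\overline{V(\mathbf a)}$ and interchanges $H^{2-q,q}$ with $H^{q,2-q}$, matching the identity $S(-\mathbf a)=4-S(\mathbf a)$. Fixing the convention so that the unique holomorphic two-form (the case $q=0$) has age $1$ then forces the correct sign throughout.
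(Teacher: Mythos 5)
Your proof is correct, but it is a genuinely different route from the paper's: the paper gives no argument at all for Proposition \ref{S-Hodge-prop}, deferring entirely to Shioda's analysis of Fermat varieties in \cite{Shioda-Fermat}. Your argument via Griffiths' residue theory is self-contained and standard: the identification $H^{2-q,q}_{\mathrm{prim}}(Y_d,\C)\cong R_{(q+1)d-4}$ with $R=\C[X_0,\dots,X_3]/(X_0^{d-1},\dots,X_3^{d-1})$, the equivariance computation showing that the residue of $X^{\bf b}\Omega/F^{q+1}$ has character ${\bf a}={\bf b}+(1,1,1,1)$, and the one-line age calculation $S({\bf a})=\bigl(\sum_i b_i+4\bigr)/d=q+1$. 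Beyond proving the stated proposition, this approach buys the other facts the paper also quotes without proof from \cite{Shioda-Fermat}: since ${\bf b}\mapsto{\bf b}+(1,1,1,1)$ is a bijection from the monomial basis of $\bigoplus_{q=0}^{2}R_{(q+1)d-4}$ onto $\mathfrak A_d$, one re-derives that $\dim_\C V({\bf a})=1$ exactly for ${\bf a}\in\mathfrak A_d\cup\{0\}$ and zero otherwise, and your treatment of $V(0)$ is exactly right: the trivial character does not occur in primitive cohomology (every character arising there lies in $\mathfrak A_d$), so $V(0)$ is spanned by the hyperplane class and sits in $H^{1,1}$.

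Two small caveats, neither affecting correctness. First, your closing worry about conventions is well placed: the paper's phrase ``by functoriality'' leaves ambiguous whether the action on cohomology is $g\mapsto g^*$ or $g\mapsto (g^{-1})^*$ (both are homomorphisms since $\mu_d^4/\Delta$ is abelian), and the proposition as stated holds only for the former. Note, though, that your ``internal consistency check'' against Proposition \ref{neg-prop} cannot by itself pin down the sign, since \emph{both} conventions are compatible with $S(-{\bf a})=4-S({\bf a})$ and with conjugation swapping $H^{2-q,q}$ and $H^{q,2-q}$; what actually fixes it is your final normalization that classes of type $(2,0)$ have age $1$, which is the convention the paper needs downstream (in Proposition \ref{exactly-one-prop} and in the identity $S({\bf j}_A)=1$). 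Second, the phrase ``the unique holomorphic two-form'' is a slip: $h^{2,0}(Y_d)=\binom{d-1}{3}>1$ once $d\geq 5$, so uniqueness holds for the K3 quotients $Z_{A,G}$, not for the Fermat surface $Y_d$ itself; the normalization you intend (that $H^{2,0}$ classes carry age $1$) is still the right one.
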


\begin{proof}
See \cite{Shioda-Fermat}.
\end{proof}

Now let $(A,G)$ be an adequate BHK pair.  In \cite[Cor.\ 3.2]{Kelly2}, the K3 surface $Z_{A,G}$ is shown to be birational to the quotient $Y_d/\Gamma$, for a particular subgroup $\Gamma \subseteq \mu_d^4$ (whose precise definition will not be needed here).  Thus we have an isomorphism of Hodge structures 
\[
H^2(Z_{A,G},\Q) \simeq {\bf E}\oplus H^2(Y_d,\Q)^{\Gamma},
\]
where ${\bf E}$ is generated by the classes of some $(-1)$-curves on $Z_{A,G}$.  Tensoring with $\C$, we obtain
\begin{eqnarray*}
H^2(Z_{A,G},\C) &\simeq& {\bf E_\C}\oplus H^2(Y_d,\C)^{\Gamma} \\
&=& {\bf E}_\C \oplus \left( V(0)\oplus \bigoplus_{{\bf a}\in \mathfrak A_d} V({\bf a}) \right)^\Gamma \\
&=& {\bf E}_\C \oplus V(0)\oplus \bigoplus_{{\bf a}\in \mathfrak A_d\cap L(\Gamma)} V({\bf a}),
\end{eqnarray*}
where
\[
L(\Gamma) = \set{{\bf a}\in M_d \ | \ {\bf a}(\gamma)=1 \text{ for all } \gamma\in \Gamma}.
\]
But in \cite[Prop.\ 3]{Kelly}, Kelly shows that
\[
L(\Gamma) = G^T \cap M_d,
\]
where $G^T$ is the dual group of the group $G$.  Since $\mathfrak A_d\subseteq M_d$, we arrive at
\begin{equation}\label{G-dual-decomp}
H^2(Z_{A,G},\C) \simeq {\bf E}_\C \oplus V(0)\oplus \bigoplus_{{\bf a}\in \mathfrak A_d\cap G^T} V({\bf a}).
\end{equation}

\begin{prop}\label{exactly-one-prop}
Let $(A,G)$ and $(A^T,G^T)$ be BHK mirror pairs.  Each of the sets $\mathfrak A_d\cap G$ and $\mathfrak A_d\cap G^T$ contains exactly one element of age 1.
\end{prop}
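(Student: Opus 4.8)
The plan is to read off the number of age-$1$ elements of $\mathfrak A_d\cap G^T$ directly from the Hodge decomposition (\ref{G-dual-decomp}), using the fact that $Z_{A,G}$ is a K3 surface. Since the isomorphism in (\ref{G-dual-decomp}) is one of Hodge structures, I would first match up the Hodge pieces on both sides. By Proposition \ref{S-Hodge-prop}, the one-dimensional summand $V(\mathbf a)$ lies in $H^{2-q,q}(Y_d,\C)$ exactly when $S(\mathbf a)=q+1$; in particular $V(\mathbf a)\subseteq H^{2,0}(Y_d,\C)$ if and only if $S(\mathbf a)=1$. The summand $V(0)$ sits in $H^{1,1}(Y_d,\C)$ by the same proposition, and $\mathbf E_\C$, being spanned by classes of $(-1)$-curves, is algebraic and hence also lies in the $(1,1)$-part.

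It then follows that the $(2,0)$-part of $H^2(Z_{A,G},\C)$ is precisely $\bigoplus V(\mathbf a)$, where the sum runs over those $\mathbf a\in\mathfrak A_d\cap G^T$ with $S(\mathbf a)=1$. Because $Z_{A,G}$ is a K3 surface, $\dim_\C H^{2,0}(Z_{A,G},\C)=1$, while each $V(\mathbf a)$ is one-dimensional. Comparing dimensions, I would conclude that there is exactly one $\mathbf a\in\mathfrak A_d\cap G^T$ with age $S(\mathbf a)=1$, which is the assertion for $\mathfrak A_d\cap G^T$.

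For the set $\mathfrak A_d\cap G$, I would invoke the symmetry of the BHK mirror construction. Since $(A,G)$ and $(A^T,G^T)$ form a BHK mirror pair, the pair $(A^T,G^T)$ is itself an adequate BHK pair, so $Z_{A^T,G^T}$ is again a K3 surface; moreover $(G^T)^T=G$. Applying the decomposition (\ref{G-dual-decomp}) with the roles of the two pairs exchanged gives a Hodge-structure isomorphism whose $(2,0)$-part is indexed by the age-$1$ elements of $\mathfrak A_d\cap (G^T)^T=\mathfrak A_d\cap G$, and the identical dimension count yields exactly one such element.

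The argument is short once the decomposition and the Hodge-type computation are in hand, so the only point requiring care is the identification of the \emph{full} $(2,0)$-part: I must verify that no contribution to $H^{2,0}(Z_{A,G},\C)$ is hidden in $\mathbf E_\C$ or in $V(0)$. This is where the facts that $(-1)$-curves are algebraic and that $V(0)\subseteq H^{1,1}(Y_d,\C)$ (Proposition \ref{S-Hodge-prop}) are essential, as they guarantee that the age-$1$ summands account for all of the holomorphic two-forms.
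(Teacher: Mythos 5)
Your proposal is correct and follows essentially the same argument as the paper: identify the $(2,0)$-part of the decomposition (\ref{G-dual-decomp}) via Proposition \ref{S-Hodge-prop}, use $\dim_\C H^{2,0}=1$ for a K3 surface to count the age-$1$ elements of $\mathfrak A_d\cap G^T$, and then apply the symmetric argument (using $(G^T)^T=G$) for $\mathfrak A_d\cap G$. Your added remark that $\mathbf E_\C$ and $V(0)$ contribute nothing to the $(2,0)$-part is a point the paper leaves implicit, but it is the same proof.
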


\begin{proof}
By looking at the $(2,0)$-part of the Hodge structures in (\ref{G-dual-decomp}) and applying Proposition \ref{S-Hodge-prop}, we have
\[
H^{2,0}(Z_{A,G},\C) \simeq \bigoplus_{\substack{{\bf a}\in \mathfrak A_d\cap G^T \\ S({\bf a}) = 1}} V({\bf a}).
\]
But $Z_{A,G}$ is a K3 surface, so the dimension of the left side must be 1.  Hence there is exactly one ${\bf a}\in \mathfrak A_d\cap G^T$ such that $S({\bf a})=1$.  By symmetric reasoning we have
\[
H^{2,0}(Z_{A^T,G^T},\C) \simeq \bigoplus_{\substack{  {\bf a}\in \mathfrak A_d\cap G \\ S({\bf a}) = 1 }} V({\bf a}),
\]
from which we conclude that $\mathfrak A_d\cap G$ also has exactly one element of age 1.
\end{proof}

\section{Description in terms of orbits}\label{orbit-sec}

The results in this section are of a strictly combinatorially nature.  Specifically, we will work with an integer $d\geq 1$, a subgroup $H\subseteq (\Z/d)^4$, and primes $p$ that do not divide $d$.  Our goal is to analyze the structure of the sets $\mathfrak I_d(0)\cap H$ and $\mathfrak I_d(p)\cap H$ under an assumption on $H$ inspired by Proposition \ref{exactly-one-prop}.  The results obtained will be applied to the geometric setting in the next section.

We let $U_d = (\Z/d)^\times$.  This group acts upon $(\Z/d)^4$ by multiplication, and it preserves the subset $\mathfrak A_d$ and any subgroup of $(\Z/d)^4$.  In this section, we fix some subgroup $H\subseteq (\Z/d)^4$.  Hence $U_d$ acts upon the set $\mathfrak A_d\cap H$, and we let
\[
\Phi_H = (\mathfrak A_d\cap H)/U_d
\]
denote the collection of orbits of this action.  For a prime $p\nmid d$, let $\inn{p}\subseteq U_d$ denote the subgroup generated by (the congruence class of) $p$.  Since the subgroup $\inn{p}$ will of course preserve any orbit $\mathcal O\in \Phi_H$ of the larger group $U_d$, we make another piece of notation: Let
\[
\Phi_H(p,\mathcal O) = \mathcal O/\inn{p}
\]
denote the collection of orbits of $\inn{p}$ acting upon a $U_d$-orbit $\mathcal O$.  Thus we have decomposed each $\mathcal O\in \Phi_H$ as the disjoint union
\[
\mathcal O = \coprod_{T\in \Phi_H(p,\mathcal O)} T
\]
and decomposed $\mathfrak A_d\cap H$ as
\[
\mathfrak A_d\cap H = \coprod_{\mathcal O\in \Phi_H} \mathcal O = \coprod_{\mathcal O\in \Phi_H} \left( \coprod_{T\in \Phi_H(p,\mathcal O)} T \right).
\]

We now restate the definitions of the sets $\mathfrak B_d(0)\cap H$ and $\mathfrak I_d(0)\cap H$ in terms of the orbits $\Phi_H$.  First note that if ${\bf a}\in \mathfrak A_d\cap H$, then
\begin{eqnarray*}
{\bf a}\in \mathfrak B_d(0)\cap H &\iff&  S(t{\bf a}) = 2 \text{ for all } t\in U_d \\
&\iff& S({\bf b}) = 2 \text{ for all } {\bf b} \in U_d\cdot {\bf a},
\end{eqnarray*}
where $U_d\cdot {\bf a}$ denotes the $U_d$-orbit of ${\bf a}$.  Therefore $\mathfrak B_d(0)\cap H$ is a union of certain orbits in $\Phi$, namely those $\mathcal O\in \Phi_H$ that only contain elements of age 2.  In symbols,
\[
\mathfrak B_d(0)\cap H = \coprod \set{\mathcal O \in \Phi_H \ | \ S({\bf a}) = 2 \text{ for all } {\bf a}\in \mathcal O}.
\]
Therefore, taking the complement of $\mathfrak B_d(0)\cap H$ inside $\mathfrak A_d\cap H$, we obtain
\[
\mathfrak I_d(0)\cap H = \coprod \set{\mathcal O \in \Phi_H \ | \ S({\bf a}) \neq 2 \text{ for some } {\bf a}\in \mathcal O}.
\]
But if $S({\bf a})\neq 2$, then by Proposition \ref{neg-prop}, either $S({\bf a})=1$ or $S(-{\bf a})=1$.  Therefore, we may alternatively write
\begin{equation}\label{I-0-redefined}
\mathfrak I_d(0)\cap H = \coprod \set{\mathcal O \in \Phi_H \ | \ S({\bf a}) = 1 \text{ for some } {\bf a}\in \mathcal O}.
\end{equation}
In words, this says that $\mathfrak I_d(0)\cap H$ is the union of those $U_d$-orbits $\mathcal O$ that contain at least one element of age 1.

From (\ref{I-0-redefined}), we can deduce:

\begin{prop}\label{0-prop}
Suppose that $\mathfrak A_d\cap H$ contains exactly one element ${\bf a}_0$ of age 1.  If $h$ denotes the order of ${\bf a}_0$ in $(\Z/d)^4$, then
\[
\mathfrak I_d(0)\cap H = \set{n{\bf a}_0 \ | \ 1\leq n\leq h, (n,h)=1}
\]
is a set of cardinality $\phi(h)$, where $\phi$ denotes Euler's totient function.
\end{prop}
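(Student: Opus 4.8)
The plan is to leverage the reformulation (\ref{I-0-redefined}), which already expresses $\mathfrak I_d(0)\cap H$ as the union of those $U_d$-orbits in $\Phi_H$ that contain at least one element of age $1$. Recall that $U_d$ preserves both $\mathfrak A_d$ and the subgroup $H$, so the orbit $U_d\cdot{\bf a}_0$ of any ${\bf a}_0\in\mathfrak A_d\cap H$ is genuinely contained in $\mathfrak A_d\cap H$. Now the hypothesis grants that $\mathfrak A_d\cap H$ contains exactly one element ${\bf a}_0$ of age $1$; consequently the only orbit meeting the age-$1$ condition is $U_d\cdot{\bf a}_0$, and so (\ref{I-0-redefined}) immediately yields $\mathfrak I_d(0)\cap H = U_d\cdot{\bf a}_0$. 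This reduces the proposition to the purely group-theoretic task of describing this single orbit and computing its cardinality.

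Next I would parametrize the orbit by the action map $t\mapsto t{\bf a}_0$ for $t\in U_d=(\Z/d)^\times$. Writing $h$ for the additive order of ${\bf a}_0$ in $(\Z/d)^4$ (which divides $d$, since $d$ annihilates the group), the product $t{\bf a}_0$ depends only on the residue of $t$ modulo $h$: indeed $t{\bf a}_0 = t'{\bf a}_0$ if and only if $(t-t'){\bf a}_0 = 0$, which holds precisely when $h\mid(t-t')$. Thus the orbit map factors through the reduction homomorphism $(\Z/d)^\times\to(\Z/h)^\times$, and $U_d\cdot{\bf a}_0$ is in bijection with the image of this reduction.

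The key arithmetic input is that the reduction map $(\Z/d)^\times\to(\Z/h)^\times$ is surjective whenever $h\mid d$, a standard fact obtained by reducing via the Chinese Remainder Theorem to the case of prime powers. Granting this, the set of residues modulo $h$ of the units $t\in U_d$ is exactly $(\Z/h)^\times$, so
\[
U_d\cdot{\bf a}_0 = \set{n{\bf a}_0 \ | \ 1\leq n\leq h,\ (n,h)=1},
\]
where the representatives $n$ run over the integers in $[1,h]$ coprime to $h$. Since these representatives are pairwise distinct modulo $h$, the map $n\mapsto n{\bf a}_0$ is injective on them, and the orbit therefore has cardinality $\phi(h)$. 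I expect the only genuine subtlety to be the surjectivity of $(\Z/d)^\times\to(\Z/h)^\times$; the remaining steps are routine bookkeeping with the orbit and its stabilizer.
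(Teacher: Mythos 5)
Your proof is correct and follows essentially the same route as the paper: both arguments use (\ref{I-0-redefined}) to identify $\mathfrak I_d(0)\cap H$ with the single $U_d$-orbit of ${\bf a}_0$, and then show this orbit equals $\set{n{\bf a}_0 \ | \ 1\leq n\leq h,\ (n,h)=1}$ by showing the units mod $d$ hit every residue class in $(\Z/h)^\times$. The only difference is presentational: you cite the surjectivity of $(\Z/d)^\times\to(\Z/h)^\times$ as a standard fact, whereas the paper proves exactly this statement inline via an explicit Chinese Remainder Theorem construction (choosing $N\equiv n \pmod h$ and $N\equiv 1 \pmod Q$ with $Q$ the product of primes dividing $d$ but not $h$).
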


\begin{proof}
By (\ref{I-0-redefined}), $\mathfrak I_d(0)\cap H$ is equal to the orbit of ${\bf a}_0$ under $U_d$.  That is, $\mathfrak I_d(0)\cap H = R_d$ where
\[
R_d = \set{m{\bf a}_0 \ | \ 1\leq m\leq d, (m,d)=1}.
\]
Letting $R_h=\set{n{\bf a}_0 \ | \ 1\leq n\leq h, (n,h)=1}$, we must show that $R_d=R_h$.

If $m{\bf a}_0\in R_d$, let $m\equiv m_0\pmod{h}$, where $1\leq m_0\leq h$.  Since $h\mid d$, we have $(m_0,h)=1$, and so $m{\bf a}_0=m_0{\bf a}_{0} \in R_h$.  Thus $R_d\subseteq R_h$.

For the opposite inclusion, take $n{\bf a}_0\in R_h$.  The main obstacle to concluding that $n{\bf a}_0\in R_d$ is that we might have $(n,h)=1$ but $(n,d)>1$.  If the primes dividing $h$ and $d$ are identical, this will not happen; so assuming otherwise, suppose that $Q>1$ denotes the product of all primes that divide $d$ but not $h$.  By the Chinese Remainder Theorem, there exists an integer $N$ satisfying $1\leq N\leq hQ \leq d$, $N\equiv n \pmod{h}$ and $N\equiv 1\pmod{Q}$.  Then $N$ will be not be divisible by any primes dividing $d$, and thus $n{\bf a}_0 = N{\bf a}_0\in R_d$.

Finally, we note that all elements of $R_h$ are distinct since the order of ${\bf a}_0$ is $h$, so it has cardinality $\phi(h)$.
\end{proof}

Likewise, we may recast $\mathfrak B_d(p)\cap H$ and $\mathfrak I_d(p)\cap H$ in terms of the orbits $\Phi_H(p)$.  Recalling that $f$ is the order of $p$ in $U_d$, we note that if ${\bf a}\in \mathfrak A_d\cap H$ then
\begin{eqnarray*}
{\bf a}\in \mathfrak B_d(p)\cap H &\iff& \sum_{j=0}^{f-1} S(tp^j{\bf a}) = 2f \text{ for all } t\in U_d \\
&\iff& \sum_{j=0}^{f-1} S(p^j{\bf b}) = 2f \text{ for all } {\bf b} \in U_d\cdot {\bf a}.
\end{eqnarray*}
Now the sum $\sum_{j=0}^{f-1} S(p^j{\bf b})$ consists of $f$ terms, so we have
\[
\sum_{j=0}^{f-1} S(p^j{\bf b}) = 2f \iff \sum_{j=0}^{f-1} (S(p^j{\bf b})-2)=0.
\]
The quantity $S(p^j{\bf b})-2$ takes values in $\set{-1,0,1}$ by Proposition \ref{neg-prop}, and so the equation on the right side is true if and only if there are an equal number of occurrences of 1 and $-1$ in the sum.  That is, we have $\sum_{j=0}^{f-1} S(p^j{\bf b}) = 2f$ if and only if 1 and 3 appear in equal numbers in the sequence
\[
S({\bf b}), S(p {\bf b}), \ldots, S(p^{f-1}{\bf b}).
\]
Finally, we note that sequence 
\begin{equation}\label{sequence}
{\bf b}, p {\bf b}, \ldots, p^{f-1}{\bf b}
\end{equation}
is simply the orbit of ${\bf b}$ under $\inn{p}$, but possibly repeated a certain number of times.  (More precisely, if the $\inn{p}$-orbit of ${\bf b}$ has size $f'$, then $f'\mid f$ and the sequence (\ref{sequence}) repeats this orbit $f/f'$ times.)  Hence we conclude
\begin{eqnarray*}
{\bf a}\in \mathfrak B_d(p)\cap H &\iff& \sum_{j=0}^{f-1} S(p^j{\bf b}) = 2f \text{ for all } {\bf b} \in U_d\cdot {\bf a} \\
&\iff& \#\big((\inn{p}\cdot {\bf b}) \cap S^{-1}(1)\big) = \#\big((\inn{p}\cdot {\bf b}) \cap S^{-1}(3)\big) \text{ for all } {\bf b} \in U_d\cdot {\bf a}
\end{eqnarray*}
As with $\mathfrak B_d(0)\cap H$, we see that $\mathfrak B_d(p)\cap H$ is a union of certain orbits in $\Phi_H$.  Looking at the preceding equivalence, we can state this precisely as:
\[
\mathfrak B_d(p)\cap H = \coprod \set{\mathcal O\in \Phi_H \ | \ \#(T\cap S^{-1}(1)) = \#(T\cap S^{-1}(3)) \text{ for all } T\in \Phi_H(p,\mathcal O)}.
\]
Therefore we have
\begin{equation}\label{I-p-redefined}
\mathfrak I_d(p)\cap H = \coprod \set{\mathcal O\in \Phi_H \ | \ \#(T\cap S^{-1}(1)) \neq \#(T\cap S^{-1}(3)) \text{ for some } T\in \Phi_H(p,\mathcal O)}.
\end{equation}
In words, $\mathfrak I_d(p)\cap H$ is the union of those $U_d$-orbits $\mathcal O$ that contain at least one $\inn{p}$-orbit $T$ having unequal numbers of elements of age 1 and 3.

\begin{prop}\label{p-prop}
Suppose that $\mathfrak A_d\cap H$ contains exactly one element ${\bf a}_0$ of age 1. If $h$ denotes the order of ${\bf a}_0$ in $(\Z/d)^4$, then we have the following two possibilities for $\mathfrak I_d(p)\cap H$:
\begin{enumerate}
\item If $p^\ell \equiv -1\pmod{h}$ for some $\ell$, then $\mathfrak I_d(p)\cap H = \emptyset$.
\item If $p^\ell\not\equiv -1 \pmod{h}$ for all $\ell$, then $\mathfrak I_d(p)\cap H = \mathfrak I_d(0)\cap H$.
\end{enumerate}
\end{prop}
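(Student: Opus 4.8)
The plan is to exploit the orbit description (\ref{I-p-redefined}), which expresses $\mathfrak I_d(p)\cap H$ as the union of those $U_d$-orbits $\mathcal O$ containing some $\inn{p}$-orbit $T$ with $\#(T\cap S^{-1}(1))\neq \#(T\cap S^{-1}(3))$. The first step is to pin down exactly where the ages $1$ and $3$ can occur inside $\mathfrak A_d\cap H$. By hypothesis ${\bf a}_0$ is the unique element with $S({\bf a}_0)=1$; applying Proposition \ref{neg-prop}(2) I would show that $-{\bf a}_0$ is then the unique element of age $3$, since if ${\bf b}\in\mathfrak A_d\cap H$ had age $3$, then $-{\bf b}\in\mathfrak A_d\cap H$ (using that $H$ is a subgroup and $-1\in U_d$) would have age $1$, forcing $-{\bf b}={\bf a}_0$. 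Consequently $S^{-1}(1)=\{{\bf a}_0\}$ and $S^{-1}(3)=\{-{\bf a}_0\}$ within $\mathfrak A_d\cap H$, while every other element has age $2$.

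The second step is to localize the whole computation to a single $U_d$-orbit. Since $-1\in U_d$, both ${\bf a}_0$ and $-{\bf a}_0$ lie in the common $U_d$-orbit $\mathcal O_0=U_d\cdot {\bf a}_0$. Any $U_d$-orbit $\mathcal O\neq \mathcal O_0$ then consists entirely of age-$2$ elements, so each of its $\inn{p}$-orbits $T$ satisfies $\#(T\cap S^{-1}(1))=\#(T\cap S^{-1}(3))=0$; by (\ref{I-p-redefined}) no such $\mathcal O$ contributes. Hence $\mathfrak I_d(p)\cap H$ equals either $\emptyset$ or $\mathcal O_0$, and by Proposition \ref{0-prop} the orbit $\mathcal O_0$ is precisely $\mathfrak I_d(0)\cap H$. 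It remains only to decide which of the two alternatives holds.

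The final step reduces this dichotomy to a congruence. The orbit $\mathcal O_0$ contributes exactly when some $\inn{p}$-orbit $T\subseteq \mathcal O_0$ has $\#(T\cap S^{-1}(1))\neq \#(T\cap S^{-1}(3))$; since the only relevant elements are ${\bf a}_0$ and $-{\bf a}_0$, this happens if and only if ${\bf a}_0$ and $-{\bf a}_0$ lie in \emph{distinct} $\inn{p}$-orbits. I would then observe that they share an $\inn{p}$-orbit if and only if $p^\ell{\bf a}_0=-{\bf a}_0$ for some $\ell$, i.e.\ $(p^\ell+1){\bf a}_0=0$ in $(\Z/d)^4$; as ${\bf a}_0$ has order $h$, this is equivalent to $p^\ell\equiv -1\pmod h$. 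When such an $\ell$ exists, ${\bf a}_0$ and $-{\bf a}_0$ share an $\inn{p}$-orbit with equal counts $1=1$, while every other $\inn{p}$-orbit in $\mathcal O_0$ has counts $0=0$; hence $\mathcal O_0$ does not contribute and $\mathfrak I_d(p)\cap H=\emptyset$, giving (1). When no such $\ell$ exists, the $\inn{p}$-orbit of ${\bf a}_0$ contains ${\bf a}_0$ but not $-{\bf a}_0$, so it has unequal counts $1\neq 0$; hence $\mathcal O_0$ contributes and $\mathfrak I_d(p)\cap H=\mathcal O_0=\mathfrak I_d(0)\cap H$, giving (2).

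I expect the one genuinely substantive point to be the first step: confirming via Proposition \ref{neg-prop} that uniqueness of the age-$1$ element forces both uniqueness of the age-$3$ element and age $2$ for everything else. This is exactly what collapses the problem onto the single orbit $\mathcal O_0$, after which the orbit bookkeeping and the passage to the condition $p^\ell\equiv-1\pmod h$ are routine.
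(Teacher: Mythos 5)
Your proof is correct and follows essentially the same route as the paper's: use Proposition \ref{neg-prop} to see that $-{\bf a}_0$ is the unique age-$3$ element, conclude that $U_d\cdot{\bf a}_0$ is the only orbit that can contribute and identify it with $\mathfrak I_d(0)\cap H$ via Proposition \ref{0-prop}, then settle the dichotomy by whether ${\bf a}_0$ and $-{\bf a}_0$ lie in the same $\inn{p}$-orbit, which is equivalent to $p^\ell\equiv -1\pmod{h}$ for some $\ell$. The only difference is that you make explicit the small verifications (closure of $\mathfrak A_d\cap H$ under negation, and the translation of the shared-orbit condition into the congruence via the order of ${\bf a}_0$) that the paper leaves implicit.
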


\begin{proof}
By Proposition \ref{neg-prop}, the hypothesis implies that $-{\bf a}_0$ is the only element of $\mathfrak A_d\cap H$ of age 3.  Hence $U_d\cdot {\bf a}_0$ is the only orbit in $\Phi_H$ that doesn't consist entirely of elements of age 2.  So by (\ref{I-p-redefined}), $\mathfrak I_d(p)\cap H$ can only be empty or equal to $U_d \cdot {\bf a}_0$, which in turn is equal to $\mathfrak I_d(0)\cap H$ by Proposition \ref{0-prop}.

If we assume that $p^\ell\equiv -1 \pmod{h}$, we find that $\inn{p}\cdot{\bf a}_0$ contains one element of age 1 and one element of age 3, and this gives the first statement.  But assuming $p^\ell\not \equiv -1 \pmod{h}$ for all $k$, it follows that the $\inn{p}\cdot {\bf a}_0$ contains one element of age 1 and no elements of age 3; hence $U_d\cdot {\bf a}_0$ is contained in $\mathfrak I_d(p)\cap H$, giving the second statement.
\end{proof}

\section{Picard numbers of K3 surfaces of BHK type} 

Let $(A,G)$ be an adequate BHK pair, and let $d$ be defined as in (\ref{B-d-defn}).  As before, this defines a polynomial $F_A$ that is quasihomogeneous of degree $h$ with weight system ${\bf q}=(q_0,q_1,q_2,q_3)$.  By assumption, the group $G$ satisfies $J_{F_A}\subseteq G\subseteq \SL(F_A)$, and thus $G$ contains the element
\[
{\bf j}_{A} = \frac{d}{h}{\bf q} = \left(\frac{q_0d}{h},\frac{q_1d}{h},\frac{q_2d}{h},\frac{q_3d}{h}\right),
\]
which is a generator of $J_{F_A}$.  Now
\begin{equation}\label{S-of-j}
S({\bf j}_{A}) = \sum_{i=0}^3 \inn{\frac{q_id/h}{d}} = \sum_{i=0}^3 \inn{\frac{q_i}{h}} = \sum_{i=0}^3 \frac{q_i}{h} = 1
\end{equation}
due to the Calabi-Yau requirement $\sum_i q_i = h$.

\begin{lem}\label{exactly-one-lemma}
Let $(A,G)$ and $(A^T,G^T)$ be BHK mirror pairs.  Then ${\bf j}_A$ is the only element of age 1 in the set $\mathfrak A_d\cap G^T$, where $G^T$ is the dual group of $G$.
\end{lem}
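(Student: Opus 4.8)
The engine of the proof will be Proposition~\ref{exactly-one-prop}, which already guarantees that $\mathfrak A_d\cap G^T$ contains exactly one element of age~$1$. Consequently the whole lemma collapses to a single existence assertion: it suffices to exhibit ${\bf j}_A$ as \emph{an} age-$1$ element of $\mathfrak A_d\cap G^T$, for then uniqueness promotes it to \emph{the} age-$1$ element. So the plan is to verify, in turn, that (i) ${\bf j}_A\in\mathfrak A_d$, (ii) $S({\bf j}_A)=1$, and (iii) ${\bf j}_A\in G^T$.

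The first two points are quick. For (i), the coordinates of ${\bf j}_A=\tfrac{d}{h}{\bf q}$ are the integers $q_id/h$ (integral because $h\mid d$); the Calabi--Yau requirement $\sum_i q_i=h$ together with $q_i>0$ forces $0<q_i<h$, hence $0<q_id/h<d$, so every coordinate is nonzero in $\Z/d$, while $\sum_i q_id/h=d\equiv0$ places ${\bf j}_A$ in $M_d$; thus ${\bf j}_A\in\mathfrak A_d$. For (ii), I would simply invoke the computation \eqref{S-of-j}, giving $S({\bf j}_A)=\sum_i\inn{q_i/h}=\sum_i q_i/h=1$.

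The substance of the argument, and the step I expect to demand the most care, is (iii): placing ${\bf j}_A$ inside the dual group $G^T$. Unwinding the definition from \S\ref{mirror-sec}, membership ${\bf j}_A\in G^T$ requires two things: (a) that ${\bf j}_A\in\Aut(F_{A^T})$, i.e.\ ${\bf j}_AA\equiv0\pmod d$; and (b) that $\inn{{\bf j}_A,{\bf b}}=0$ for every ${\bf b}\in G$. For (b) I would expand $\inn{{\bf j}_A,{\bf b}}=\tilde{\bf j}_AA\tilde{\bf b}^T$ and use $A\tilde{\bf b}^T\equiv0\pmod d$ (valid since ${\bf b}\in\Aut(F_A)$) in tandem with $h\mid d$ to force the value into the zero class of $\Z/d^2$. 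The crux is part (a): by construction ${\bf j}_A$ satisfies $A{\bf j}_A^T\equiv0\pmod d$, which is exactly the defining relation for $\Aut(F_A)$, whereas membership in $\Aut(F_{A^T})$ asks instead for the transposed relation ${\bf j}_AA\equiv0\pmod d$. Reconciling these two is the heart of the matter; I would attempt it by tracking how the containments $J_{F_A}\subseteq G$ and $J_{F_{A^T}}\subseteq G^T$ are interchanged under the pairing $\inn{\cdot,\cdot}$, and carefully controlling this interaction is the decisive step on which the argument turns.
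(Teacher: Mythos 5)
Your skeleton is the same as the paper's: Proposition \ref{exactly-one-prop} supplies uniqueness, so it remains only to exhibit one age-$1$ element of $\mathfrak A_d\cap G^T$. (One omission at this stage: the paper first invokes Lemma \ref{lifting-lemma} to pass to characteristic $0$, since Proposition \ref{exactly-one-prop} rests on the Hodge-theoretic decomposition of \S6, which is a characteristic-$0$ statement; you apply the proposition without this reduction.) Your steps (i) and (ii) are fine. But step (iii) is a genuine gap, and no amount of ``tracking the containments under the pairing'' will close it, because the assertion ${\bf j}_A\in G^T$ is simply false in general: ${\bf j}_A$ already fails to lie in $\Aut(F_{A^T})$. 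Membership in $\Aut(F_{A^T})$ requires ${\bf j}_A A\equiv 0\pmod d$, whereas the defining relation $A{\bf q}^T=(h,h,h,h)^T$ yields only $A{\bf j}_A^T\equiv 0\pmod d$, and these are genuinely different conditions for non-symmetric $A$. The paper's own example makes this concrete: there $h=7$, ${\bf q}=(2,3,1,1)$, $d=168$, so ${\bf j}_A=(48,72,24,24)$, and one computes ${\bf j}_A A=(96,24,48,24)\not\equiv 0\pmod{168}$; hence ${\bf j}_A\notin\Aut(F_{A^T})\supseteq G^T$. The tension you flagged as ``the heart of the matter'' is in fact a typo in the statement of the lemma: the intended element is ${\bf j}_{A^T}=\frac{d}{h_T}{\bf q}_T$, the generator of $J_{F_{A^T}}$, not ${\bf j}_A$.

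With that correction, everything you wanted for step (iii) is already available in \S\ref{mirror-sec}: ${\bf j}_{A^T}\in G^T$ is immediate from the listed fact $J_{F_{A^T}}\subseteq G^T\subseteq\SL(F_{A^T})$. (If you want it by direct computation, the correct version of your pairing argument (b) runs as follows: taking the integral lift $\tilde{\bf j}_{A^T}=\frac{d}{h_T}{\bf q}_T$ and using ${\bf q}_T A=(h_T,h_T,h_T,h_T)$, one gets $\tilde{\bf j}_{A^T}A=(d,d,d,d)$, so $\inn{{\bf j}_{A^T},{\bf b}}=d\sum_i\tilde b_i\equiv 0\pmod{d^2}$ for every ${\bf b}\in G$ --- note this uses ${\bf b}\in\SL(F_A)$, i.e.\ $\sum_i b_i\equiv 0\pmod d$, and not merely $h\mid d$ as you proposed; your sketch of (b) would give $d$ times an uncontrolled integer otherwise.) The remaining items transfer verbatim: $A^T$ satisfies the Calabi--Yau requirement by Remark \ref{CY-remark}, so the computation (\ref{S-of-j}) applied to $A^T$ gives $S({\bf j}_{A^T})=\sum_i q_{i,T}/h_T=1$, and your argument (i) with $({\bf q}_T,h_T)$ in place of $({\bf q},h)$ puts ${\bf j}_{A^T}$ in $\mathfrak A_d$. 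This corrected reading is also the one the paper actually uses: in the proof of Theorem \ref{main-thm} the lemma is applied to the pair $(A^T,G^T)$ to conclude that ${\bf j}_A$ is the unique age-$1$ element of $\mathfrak A_d\cap G$, which is exactly the corrected statement with the roles of $A$ and $A^T$ exchanged.
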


\begin{proof}
By Lemma \ref{lifting-lemma}, we know that $(A,G)$ gives rise to the K3 surface in characteristic 0 that we have denoted by $Z_{A,G}$.  Applying (\ref{S-of-j}) and  Proposition \ref{exactly-one-prop} to this surface, we obtain the desired conclusion.
\end{proof}

Our main result now follows:

\begin{thm}\label{main-thm}
Let $(A,G)$ and $(A^T,G^T)$ be BHK mirror pairs.  Let $h$ (resp.\ $h_T$) denote the degree of the quasihomogeneous polynomial $F_A$ (resp.\ $F_{A^T}$).  Over the field $k$, the (geometric) Picard numbers of the BHK mirror surfaces $Z_{A,G}$ and $Z_{A^T,G^T}$ are given as follows:
\begin{enumerate}
\item If $\chr k = 0$ then
\begin{eqnarray*}
\rho(Z_{A,G}) &=& 22-\phi(h_T) \\
\rho(Z_{A^T,G^T}) &=& 22-\phi(h).
\end{eqnarray*}
\item If $\chr k=p>0$ with $p\nmid d$ then
\begin{eqnarray*}
\rho(Z_{A,G}) &=& \begin{cases}
22 & \text{ if } p^\ell\equiv -1 \pmod{h_T} \text{ for some } \ell \\
22-\phi(h_T) & \text{ if } p^\ell\not\equiv -1 \pmod{h_T} \text{ for all } \ell \\
\end{cases} \\
\rho(Z_{A^T,G^T}) &=& \begin{cases}
22 & \text{ if } p^\ell\equiv -1 \pmod{h} \text{ for some } \ell \\
22-\phi(h) & \text{ if } p^\ell\not\equiv -1 \pmod{h} \text{ for all } \ell \\
\end{cases}
\end{eqnarray*}
\end{enumerate}
\end{thm}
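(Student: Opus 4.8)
The plan is to combine the combinatorial description of $\mathfrak I_d(p)\cap H$ from Section~\ref{orbit-sec} with Kelly's formula. By Theorem~\ref{Kelly-thm} it is enough to evaluate the two cardinalities $\#(\mathfrak I_d(p)\cap G^T)$ and $\#(\mathfrak I_d(p)\cap G)$. Both target sets are exactly of the type analyzed in Propositions~\ref{0-prop} and~\ref{p-prop}, because Lemma~\ref{exactly-one-lemma} shows that $\mathfrak A_d\cap G$ contains a unique element of age $1$, namely $\mathbf j_A$, and the same lemma applied to the mirror pair (in which the roles of $(A,G)$ and $(A^T,G^T)$ are interchanged) shows that $\mathfrak A_d\cap G^T$ contains a unique element of age $1$, namely $\mathbf j_{A^T}$. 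Thus I would invoke Propositions~\ref{0-prop} and~\ref{p-prop} with $(H,\mathbf a_0)=(G,\mathbf j_A)$ and with $(H,\mathbf a_0)=(G^T,\mathbf j_{A^T})$, respectively.

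The single new computation --- and the step I expect to need the most care --- is to identify the integer denoted $h$ in those propositions (the order of the age-$1$ generator in $(\Z/d)^4$) with the degree of the corresponding quasihomogeneous polynomial. I would show that $\mathbf j_A=\tfrac{d}{h}\mathbf q$ has order exactly $h$ in $(\Z/d)^4$. Indeed $n\mathbf j_A=0$ is equivalent to $h\mid nq_i$ for every $i$, hence to $\tfrac{h}{\gcd(h,q_i)}\mid n$ for every $i$, so the order is $\lcm_{0\le i\le 3}\tfrac{h}{\gcd(h,q_i)}$. This quantity divides $h$, and for each prime power $\ell^a$ exactly dividing $h$ the hypothesis $\gcd(q_0,q_1,q_2,q_3)=1$ furnishes an index $i$ with $\ell\nmid q_i$, whence $\ell^a\mid \tfrac{h}{\gcd(h,q_i)}$; so the least common multiple is $h$ on the nose. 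The identical argument applied to the transpose data gives that $\mathbf j_{A^T}=\tfrac{d}{h_T}\mathbf q_T$ has order $h_T$.

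Granting these orders, the theorem drops out. In characteristic $0$, Proposition~\ref{0-prop} yields $\#(\mathfrak I_d(0)\cap G^T)=\phi(h_T)$ and $\#(\mathfrak I_d(0)\cap G)=\phi(h)$; substituting into Theorem~\ref{Kelly-thm} gives $\rho(Z_{A,G})=22-\phi(h_T)$ and $\rho(Z_{A^T,G^T})=22-\phi(h)$. In characteristic $p\nmid d$, Proposition~\ref{p-prop} with $(H,\mathbf a_0)=(G^T,\mathbf j_{A^T})$ shows that $\mathfrak I_d(p)\cap G^T$ is empty when $p^\ell\equiv-1\pmod{h_T}$ for some $\ell$ and has cardinality $\phi(h_T)$ otherwise, while the same proposition with $(H,\mathbf a_0)=(G,\mathbf j_A)$ governs $\mathfrak I_d(p)\cap G$ via the congruence modulo $h$; feeding these into Theorem~\ref{Kelly-thm} produces the two displayed case distinctions.

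Apart from the order computation, the only thing to watch is the ``mirror'' bookkeeping: the Picard number of $Z_{A,G}$ is read off from the dual group $G^T$, and hence from $h_T$, whereas that of $Z_{A^T,G^T}$ is read off from $G$, and hence from $h$. Keeping this crossing straight is what makes the two degrees appear transposed relative to the surfaces in the final formulas, and I would state it explicitly to avoid confusion.
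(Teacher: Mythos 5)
Your proposal is correct and takes essentially the same route as the paper: reduce via Theorem~\ref{Kelly-thm} to counting $\mathfrak I_d(p)\cap G$ and $\mathfrak I_d(p)\cap G^T$, use Lemma~\ref{exactly-one-lemma} (in both directions of the mirror pair) to identify ${\bf j}_A$ and ${\bf j}_{A^T}$ as the unique age-$1$ elements of $\mathfrak A_d\cap G$ and $\mathfrak A_d\cap G^T$, show these have orders exactly $h$ and $h_T$, and then apply Propositions~\ref{0-prop} and~\ref{p-prop}. The only difference is one of detail: you write out the $\lcm$ argument (via $\gcd(q_0,q_1,q_2,q_3)=1$) for the order of ${\bf j}_A$, which the paper compresses into a single remark, and your reading of Lemma~\ref{exactly-one-lemma} matches how the paper actually invokes it in its own proof.
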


\begin{proof}
Applying Lemma \ref{exactly-one-lemma} to $(A^T,G^T)$, we find that ${\bf j}_A$ is the only element of age 1 in $G$.  Now the order of ${\bf j}_{A}$ in $(\Z/d)^4$ clearly divides $h$, and by using the fact that the weights $q_i$ have no nontrivial common divisor, one may deduce that its order is exactly $h$.  Therefore we conclude by Proposition \ref{0-prop} that
\begin{equation}\label{d-exp}
\mathfrak I_d(0)\cap G = \set{n{\bf j}_{A} \ | \ 1\leq n\leq h, (n,h)=1}
\end{equation}
has cardinality $\phi(h)$.  We may apply a symmetric line of reasoning to describe the structure of $\mathfrak I_d(0)\cap G^T$.  Combining this with Theorem \ref{Kelly-thm} and Proposition \ref{p-prop}, we obtain the theorem.
\end{proof}

\begin{example}
This example is taken from \S4 of \cite{Kelly}.  Let
\[
A = \begin{bmatrix}
2 & 1 & 0 & 0 \\
0 & 2 & 1 & 0 \\
0 & 0 & 6 & 1 \\
0 & 0 & 0 & 7
\end{bmatrix},
\]
so that
\[
F_A = x_0^2x_1+x_1^2x_2+x_2^6x_3+x_3^7,
\]
which is quasihomogeneous of degree $h=7$ and weights ${\bf q} = (2,3,1,1)$.  Thus $\Aut(F_A)$ is a group of order $\abs{\det(A)} = 168$, and one may show (see \cite{Kelly}) that $\SL(F_A)$ has order 21.  Since $J_{F_A}=\inn{{\bf j}_{A}}$ has order $h=7$, it follows that $[\SL(F_A):J_{F_A}]=3$, and thus there are only two groups $G$ such that $J_{F_A}\subseteq G\subseteq \SL(F_A)$, namely $G=J_{F_A}$ and $G=\SL(F_{A})$.

Reading off the rows of $A^T$, we also have
\[
F_{A^T} = x_0^2+x_0x_1^2+x_1x_2^6+x_2x_3^7,
\]
which is quasihomogeneous of degree $h_T=8$ and weights ${\bf q_T} = (4,2,1,1)$.  In this case, $J_{F_{A^T}}$ is cyclic of order 8, $\SL(F_{A^T})$ is cyclic of order 24, and so, depending upon $G$, one either has $G^T=J_{F_{A^T}}$ or $G^T=\SL(F_{A^T})$.  The precise correspondence happens to be
\[
(J_{F_A})^T = \SL(F_{A^T}), \qquad SL(F_A)^T = J_{F_{A^T}},
\]
although for the purposes of computing Picard numbers, Theorem \ref{main-thm} says that the choice of $G$ is irrelevant.  If $Z_{A,G}$ and $Z_{A^T,G^T}$ are taken to be defined over $k$ of characteristic zero, then the first part of the theorem gives
\[
\rho(Z_{A,G}) = 22-\phi(h_T) = 22-\phi(8) = 18
\]
and
\[
\rho(Z_{A^T,G^T}) = 22-\phi(h) = 22-\phi(7) = 16.
\]

Upon computing
\[
A^{-1} = 
\begin{bmatrix}
1/2 & -1/4 & 1/24 & -1/168 \\
0 & 1/2 & -1/12 & 1/84 \\
0 & 0 & 1/6 & -1/42 \\
0 & 0 & 0 & 1/7
\end{bmatrix},
\]
we find that $d = 168$.  If $\chr k=p$ and $p\nmid d$ then, in this example, the assumption $p\nmid d$ guarantees that $p$ does not divide the weights in ${\bf q}$ and ${\bf q}_T$.  We also note that $p\nmid h$ and $p\nmid h_T$, since $h,h_T$ both divide $d$. Therefore $p\equiv b \pmod{h_T}$ for some $1\leq b\leq h_T$ such that $(b,h_T)=1$.  Since $h_T=8$, this gives $p\equiv 1,3,5, \text{ or } 7 \pmod{8}$.  Only in the case $p\equiv 7 \pmod{8}$ do we have $p^\ell\equiv -1 \pmod{8}$ for some $\ell$.  Therefore by Theorem \ref{main-thm}
\[
\rho(Z_{A,G}) = \begin{cases}
22 & \text{ if } p\equiv 7 \pmod{8} \\
18 & \text{ if } p\equiv 1,3,5 \pmod{8}.
\end{cases}
\]
On the other hand, we have $p\equiv 1,2,3,4,5,\text{ or } 6 \pmod{7}$, and so $p^\ell\equiv -1 \pmod{7}$ for some $\ell$ if and only if $p\equiv 3,5,6 \pmod{7}$.  Therefore
\[
\rho(Z_{A^T,G^T}) = \begin{cases}
22 & \text{ if } p\equiv 3,5,6 \pmod{7} \\
16 & \text{ if } p\equiv 1,2,4 \pmod{7}.
\end{cases}
\]
This explains the observations in \cite[p.62]{Kelly}.


\end{example}

\bibliographystyle{math}
\bibliography{BHK-biblio}

\end{document}